\newtheorem{teo}{Theorem}[section]
\newtheorem{lem}[teo]{Lemma}
\newtheorem{cor}[teo]{Corollary}
\newtheorem{prop}[teo]{Proposition}
\newtheorem{es}[teo]{Example}
\newtheorem{defi}[teo]{Definition}
\newtheorem{oss}[teo]{Remark}
\newcommand{\res}{\mathop{\hbox{\vrule height 7pt width .5pt depth 0pt
\vrule height .5pt width 6pt depth 0pt}}\nolimits}
\newcommand{\norm}[1]{\|{#1}\|}
\newcommand{\lip}{\mathrm{Lip}}
\newcommand{\BV}{\mathrm{BV}}
\renewcommand{\phi}{\varphi}
\newcommand{\R}{{\mathbb{R}}}
\newcommand{\bbR}{{\mathbb{R}}}
\newcommand{\bbN}{{\mathbb{N}}}
\def \R {{\mathbb {R}}}
\newcommand{\bbH}{\mathbb{H}}
\newcommand{\bbS}{\mathbb{S}}
\newcommand{\average}{{\mathchoice {\kern1ex\vcenter{\hrule height.4pt
width 6pt
depth0pt} \kern-9.7pt} {\kern1ex\vcenter{\hrule height.4pt width 4.3pt
depth0pt}
\kern-7pt} {} {} }}
\newcommand{\HH}{\mathcal{H}}
\newcommand{\LL}{\mathcal{L}}
\newcommand{\area}{\mathscr A}
\newcommand{\xvett}{\bm{X}}
\newcommand{\xva}{\xvett^\ast}
\DeclareMathOperator*{\argmin}{arg\,min}
\newcommand{\dive}{\mathrm{div}\,}
\newcommand{\ud}{\,\mathrm{d}}
\newcommand{\cc}{\mathscr M}
\newcommand{\atx}[1]{{#1}_{\tau,\xi}^\ast}
\newcommand{\ut}{\overline u_\tau^*}
\title[BV area minimizers]{BV minimizers of the area functional in the Heisenberg group under the bounded slope condition}
\author{Andrea Pinamonti}
\address{Andrea Pinamonti: Dipartimento di Matematica\\Universit\`a di Padova\\ Via Trieste 63\\ \\}
\email{pinamont@math.unipd.it}
\author{Francesco Serra Cassano}
\address{Francesco Serra Cassano: Dipartimento di Matematica\\Universit\`a di Trento\\ Via Sommarive 14\\ \\}
\email{cassano@science.unitn.it}
\author{Giulia Treu}
\address{Giulia Treu: Dipartimento di Matematica\\Universit\`a di Padova\\ Via Trieste 63\\ \\}
\email{treu@math.unipd.it}
\author{Davide Vittone}
\address{Davide Vittone: Dipartimento di Matematica\\Universit\`a di Padova\\ Via Trieste 63\\ \\}
\email{vittone@math.unipd.it}
\keywords{Minimal surfaces, functions with bounded variation, Heisenberg group, sub-Riemannian geometry.}
\subjclass[2010]{49Q20, 53C17, 49Q05}
\begin{document}

\begin{abstract}
We consider the area functional for $t$-graphs in the sub-Riemannian Heisenberg group and study minimizers of the associated Dirichlet problem. We prove that, under a bounded slope condition on the boundary datum, there exists a unique minimizer and that this minimizer is Lipschitz continuous. We also provide an example showing that, in the first Heisenberg group, Lipschitz regularity is sharp even under the bounded slope condition.
\end{abstract}

\maketitle

\section{Introduction}
The area functional for the {\em $t$-graph} of a function $u\in W^{1,1}(\Omega)$ in the sub-Riemannian {\em Heisenberg group} $\bbH^n\equiv 
\R^n_x\times\R^n_y\times\R^{}_t$ is
\[
\area(u):=\int_\Omega |\nabla u+ \xva|\ud\LL^{2n}\,,
\]
where $\Omega\subset\R^{n}_{x}\times \R^n_y$ is an open set and $\xva$ is the vector field
\[
    \xva(x,y):=2(-y,x),
\]
 see \cite{pauls1,CHMY, SCV} for more details. It was shown in \cite{SCV} that the natural variational setting for this functional is the space $\BV(\Omega)$ of functions with bounded variation in $\Omega$; more precisely, it was proved that the relaxed functional of $\area$ in the $L^1$-topology is
\[
\area(u):=\int_\Omega |\nabla u+ \xva|\ud\LL^{2n}+|D^su|(\Omega), \qquad u\in \BV(\Omega),
\]
where $|D^su|(\Omega)$ is the total variation in $\Omega$ of the singular part of the distributional derivative of $u$.

In this paper, we study the minimizers of $\area$ under Dirichlet boundary conditions
\[
\min\{\area(u): u\in \BV(\Omega),u_{|\partial\Omega}=\varphi\}\,,
\]
where $\Omega$ is assumed to have Lipschitz regular boundary, $u_{|\partial\Omega}$ is the trace of $u$ on $\partial\Omega$, $\varphi\in L^1(\partial\Omega,\HH^{2n-1})$ is a fixed boundary datum and $\HH^{2n-1}$ denotes the classical Hausdorff measure of dimension $(2n-1)$. In our main result, Theorem \ref{mainteo-ex6.7} below, we prove existence, uniqueness and Lipschitz regularity of minimizers assuming that $\varphi$ satisfies the so-called {\em bounded slope condition} (see e.g. \cite{Giusti2} or Section \ref{ultSec}). We also point out that Lipschitz regularity is sharp at least in the first Heisenberg group $\bbH^1$, see Example \ref{es1}.

Our interest in this problem is twofold. On the one side, it fits in
a well-established research stream about minimal-surfaces type
problems (isoperimetric problem, existence and regularity of
$\bbH$-perimeter minimizing sets, Bernstein problem, etc.) in the
Heisenberg group, for which we refer to
\cite{LR,RR2,MontiRickly,RR1,DGNAdvMath,DGN5,Monti,R3,CCMregH1,CCMregHn,HlaPauJDiffG,HurRitRos,DGNP09,SCV}.
On the other side, we attack the problem with typical tools from the Calculus of Variations, using the so called Hilbert-Haar (or
``Semi-classical'' in \cite{Giusti2}) approach. This approach has
been recently developed and renewed to study the regularity of
minimizers of functionals starting from the regularity of the
boundary datum, without taking into account neither ellipticity nor
growth conditions on the lagrangian; see e.g. \cite{CellinaBSC,BousquetACV,BouCla, BMT,
ClarkeSNS,FiaschiTreu, MTpams, MTjde, MTccm, MTacv}.\medskip

Area minimizers have been widely studied in functional
spaces with more regularity than $\BV$. The functional $\area$ has
good variational properties such as convexity and lower
semicontinuity with respect to the $L^1$ topology. On the other
hand, it is neither coercive nor differentiable.
The lack of differentiability is due to the
presence of the so called {\em characteristic points}, i.e. the set
of points on the graph of $u$ where the tangent plane to the graph
coincides with the {\em horizontal plane}. Equivalently, the set
whose projection on $\R^{2n}$ is
\[
\displaystyle {\rm Char}(u):=\{(x,y)\in\Omega:\,\nabla u(x,y)+\xva (x,y)=0\}\,.
\]
Notice that, formally, the Euler equation associated with $\area$ is
\[
\dive\frac{\nabla u+\xva}{|\nabla u +\xva|}=0\quad \text{in }\Omega,
\]
which degenerates at points in ${\rm Char}(u)$.
Clearly, the set ${\rm Char}(u)$ has a prominent role in studying minimizers' regularity. Several examples of minimizers with at most Lipschitz regularity have been provided in $\bbH^1$, see e.g. \cite{pauls2,CHY,R2}; more recently, a non-continuous minimizer was exhibited in \cite{SCV}. A variety of very interesting results can be found in \cite{pauls1,GP1,CHMY,CHw,RR2,Sh} (where a priori $C^2$ regularity is assumed for minimizers) and in \cite{CHY2,CHMY2} (for $C^1$ minimizers), also in connection with the Bernstein problem for $t$-graphs. The much more delicate case of minimizers in Sobolev spaces was attacked in \cite{CHY}, where interesting uniqueness and comparison theorems for minimizers in the space $W^{1,2}$ were proved. Uniqueness results for Sobolev minimizers are proved also in \cite{CH2}.

Concerning the existence issue, the existence of Lipschitz
minimizers for the Dirichlet problem for $\area$ was established in
\cite{CHY}, by utilizing an elliptic
approximation argument, for $C^{2,\alpha}$-smooth boundary data on $C^{2,\alpha}$-smooth
and ``parabolically-convex'' domains. We have also to mention the papers
\cite{pauls1} and
\cite{CH}, about which we will say a few words below. Notice that the existence of minimizers is in general not
guaranteed even for smooth boundary data on smooth domains, see
\cite[Example 3.6]{SCV}. Nevertheless, an existence result (in $\BV$
and for any datum $\varphi$) was proved in \cite{SCV} for a penalized functional, see also Section \ref{proparea}.
\medskip


%

In this paper, we consider the Dirichlet problem for the functional
$\area$ in the space of function with bounded variation and we study
minimizers  under the assumption that the boundary datum $\varphi$
satisfies a bounded slope condition with constant $Q$ ($Q$-B.S.C.
for short). This approach is inspired by some classical and well
known results in the Calculus of Variations that go back to Hilbert
and Haar (\cite{Hilbert} and \cite{Haar} respectively), has then
been used by Hartman, Nirenberg and Stampacchia
\cite{HartmanBSC,HartmanConvex,HartmanNirenberg,HartStamp,Stampacchia}. The main classical result in this framework (see e.g. \cite[Chapter 1]{Giusti2} or, for minimal surfaces,  \cite{Miranda})
states that, given a strictly convex integral functional depending
only on the gradient and a boundary datum satisfying the $Q$-B.S.C.,
there exists a unique minimizer in class of Lipschitz functions and
its Lipschitz constant is not greater than $Q$. We stress that, in the general
statement in \cite{Giusti2}, neither growth assumptions nor ellipticity
conditions are required whereas these hypotheses usually play a
crucial role respectively for existence and regularity results.
In the same setting,  it has been recently proved in \cite{CellinaBSC} that any continuous minimizer in the Sobolev space $W^{1,1}$ is
Lipschitz continuous with Lipschitz constant not greater than $Q$.

The main tools used in these results are: the validity of comparison
principles between minimizers, the invariance of minimizers under
translations of the domain, and a Haar-Rad\`o type theorem stating
that  the maximum among the difference quotients of the minimizer is
attained at the boundary. Subsequent papers (\cite{MTjde,MTccm})
addressed the problem of considering functionals that are not
strictly convex. The main difficulty, in this case, is that
comparison principles do not hold in their generality (an example
can be found in \cite{Cel} or in \cite{MThrcv}) and it is overcome
by detecting special minimizers which instead satisfy them, see
\cite{Cel, CellinaUNablaU, CMT, MThrcv}. It is worth remarking that, in
all these papers, there are assumptions guaranteeing the boundedness
of the faces of the epigraph of  the lagrangian. Concerning
functionals depending also on lower order terms, this approach works
for lagrangians of sum type as $f(\xi)+g(z,u)$, see \cite{BouCla,
BousquetCV, CellinaUNablaU, FiaschiTreu}.

In the present paper we use some of the techniques described above
but we encounter new difficulties that we briefly sketch here and
will be discussed in details in the following sections. First of
all, we deal with functions of bounded variation and we use ideas of
\cite{TreuBV}, where functionals depending only on the gradient are
considered. The second point is the dependance of our functional on
points of $\Omega$ encoded in the vector field $\xva$. Moreover, the
epigraph of our lagrangian has unbounded faces. All these
peculiarities led us to face many new technical problems that will
be considered in sections \ref{strumin} and \ref{uniqSec}.

In the framework of the first Heisenberg group $\bbH^1$, the bounded
slope condition was already considered in \cite{pauls1} by S. D.
Pauls. Using an approximation scheme by means of minimal surfaces in
the Riemannian approximations of $\bbH^n$, he  showed the existence
in $W^{1,p}(\Omega)\cap C^0(\Omega)$ of weak solutions  to the Euler
equation associated with $\area$.

Before stating our main result let us underline some
peculiarities about the bounded slope condition. On the one hand, it is a quite
restrictive assumption because it implies that, unless $\varphi$ is
affine, $\Omega$ is convex. On the other hand, the class of functions
satisfying it is quite large, since M. Miranda \cite{Miranda} proved that, if
$\Omega$ is uniformly convex, then any $\varphi\in\mathcal{C}^{1,1}$
satisfies the $Q$-B.S.C. for some $Q$.

\begin{teo}\label{mainteo-ex6.7}
Let $\Omega\subset\bbR^{2n}$ be open, bounded and with Lipschitz regular boundary, and let $\varphi:\partial\Omega\to\R$ satisfy the $Q$-B.S.C. for some $Q>0$. Then, the minimization problem
\begin{equation}\label{problema}
\min\big\{\area(u):
u\in \BV(\Omega),\ u_{|\partial\Omega}=\varphi\big\}
\end{equation}
admits a unique solution $\hat{u}$. Moreover, $\hat u$ is Lipschitz continuous and $\lip(\hat{u})\leq \overline Q=\overline Q(Q,\Omega)$.
\end{teo}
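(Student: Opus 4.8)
The plan is to adapt the classical Hilbert--Haar strategy to the $\BV$ setting, following the scheme used in \cite{TreuBV} and \cite{CellinaBSC}, while handling the three new difficulties mentioned in the introduction: the $\BV$ framework (hence the singular term $|D^su|(\Omega)$), the explicit dependence on the point through the field $\xva$, and the unboundedness of the faces of the epigraph of $\xi\mapsto|\xi+\xva|$. I would organize the proof around the following steps.

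\emph{Step 1: Existence via barriers from the B.S.C.} Using the $Q$-B.S.C., for each point $P\in\partial\Omega$ one constructs an affine function $\ell_P^{\pm}$ with $|\nabla\ell_P^{\pm}|\le Q$ that touches $\varphi$ at $P$ from above/below. Since an affine function $\ell(z)=a\cdot z+b$ has $\area(\ell)=\int_\Omega|a+\xva|\,d\LL^{2n}$, these affine functions are themselves among the competitors and, more importantly, they serve as comparison barriers. Taking a minimizing sequence $(u_k)$ for \eqref{problema}, one first truncates it between $\min\varphi$ and $\max\varphi$ (which does not increase $\area$, by convexity and the structure of the singular part) to get an $L^\infty$ bound, then uses lower semicontinuity of $\area$ in $L^1$ (recalled in the excerpt) together with $\BV$ compactness to extract a limit $\hat u\in\BV(\Omega)$. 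The delicate point here is that the trace operator is not continuous under $L^1$ convergence, so one must control the boundary behaviour: this is where the affine barriers re-enter, forcing the minimizing sequence (or a suitable modification of it, glued to the barriers near $\partial\Omega$) to have trace $\varphi$, so that $\hat u$ is admissible and hence a minimizer.

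\emph{Step 2: A priori Lipschitz bound --- the Haar--Rad\`o argument.} This is the technical core, carried out in the sections \ref{strumin}--\ref{uniqSec} the authors refer to. The idea is: (i) \textbf{translation invariance} --- if $u$ is a minimizer on $\Omega$ with datum $\varphi$, then its translate $u(\cdot+\tau)$ is a minimizer on $\Omega-\tau$; crucially, because $\xva$ is \emph{affine}, $\xva(z+\tau)=\xva(z)+\xva(\tau)$, so translating only shifts the field by a constant, and adding a linear function to a minimizer of $\area$ with the shifted field is controllable. (ii) \textbf{comparison principle} --- one proves that two minimizers (for the genuine functional, or for the special minimizers one selects to circumvent non--strict convexity, cf.\ \cite{Cel,CMT,MThrcv}) are ordered if their boundary data are ordered, at least $\LL^{2n}$-a.e. (iii) Combining these on $\Omega\cap(\Omega-\tau)$ with the B.S.C.\ bound $|\varphi(P)-\varphi(Q)|\le Q|P-Q|$ on the boundary, one gets $|\hat u(z+\tau)-\hat u(z)|\le \overline Q|\tau|$ for a.e.\ $z$, with $\overline Q$ depending on $Q$ and the geometry of $\Omega$ (the constant degrades from $Q$ because the shifted field contributes). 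Hence $\hat u$ has a Lipschitz representative with $\lip(\hat u)\le\overline Q$; in particular $|D^s\hat u|(\Omega)=0$, so $\hat u$ actually minimizes the Sobolev-type functional and the earlier $\BV$ machinery was needed only to produce it.

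\emph{Step 3: Uniqueness.} Once minimizers are known to be Lipschitz, uniqueness follows from the comparison principle of Step 2 applied with equal boundary data: if $\hat u_1,\hat u_2$ are both minimizers then $\hat u_1\le\hat u_2$ and $\hat u_2\le\hat u_1$ a.e., hence they coincide. Alternatively, one uses strict convexity of $t\mapsto|t|$ along the non-characteristic directions together with the fact that the average $(\hat u_1+\hat u_2)/2$ is again a minimizer to force $\nabla\hat u_1+\xva=\nabla\hat u_2+\xva$ a.e.\ outside the common characteristic set, and then a separate argument (using that both have the same trace and the characteristic set has the structure of a level set) to conclude equality everywhere.

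The main obstacle I anticipate is Step 2, specifically making the comparison principle work: as the authors note, for non--strictly convex Lagrangians comparison fails in general, and here the Lagrangian $|\xi+\xva|$ is non--strictly convex precisely along the ``bad'' direction and its epigraph has \emph{unbounded} faces, so the selection-of-special-minimizers technique of \cite{Cel,MThrcv} must be reworked. Coupling this with the point-dependence of $\xva$ --- which breaks the naive translation argument and only survives because $\xva$ is affine --- is exactly the new technical ground the paper has to cover, and I would expect the bulk of the work (and the dependence of $\overline Q$ on $\Omega$, not just on $Q$) to live there.
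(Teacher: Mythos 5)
Your overall skeleton (penalization/barriers for existence, translation plus comparison for the Lipschitz bound with a constant degrading from $Q$ because of the linear correction $2\langle\tau^*,z\rangle$, uniqueness last) matches the paper's strategy, and you correctly spot that the affine structure of $\xva$ is what saves the translation argument. But there are two genuine gaps.

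First, your barrier argument presupposes, without proof, that affine functions can be used as comparison functions. In the classical Hilbert--Haar setting this is free because any affine function minimizes a convex functional of the gradient under its own boundary data; here the Lagrangian is $|\xi+\xva(z)|$, and the statement that an affine $L$ is a minimizer --- and, crucially for comparison, the \emph{unique} minimizer of the penalized functional $\area_{L,\Omega}$ --- is a substantive theorem (Theorem \ref{affineMIN}). The paper proves minimality by a calibration argument (testing against the divergence-free field $\xva/|\xva|$) and uniqueness by reducing to balls centered at the origin and showing that any competing minimizer must be $0$-homogeneous, hence zero. Without this, Corollary \ref{cpaffine} and Lemma \ref{datobordo} (trace attainment and the bound $|u(z)-\varphi(z_0)|\le Q|z-z_0|$) do not get off the ground, and neither your Step 1 nor your Step 2 can be completed. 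Relatedly, the paper does not ``glue a minimizing sequence to barriers'': it minimizes the penalized functional $\area_{\varphi,\Omega}$ (which always has minimizers) and then uses the affine comparison to show the penalization term vanishes, i.e.\ the trace is attained.

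Second, your primary uniqueness argument in Step 3 is circular. The comparison principle you establish in Step 2 holds only for the selected extremal minimizers $\overline u$ and $\underline u$ (this is exactly the point of the non-strictly-convex theory you cite); applied to two arbitrary minimizers with the same datum it yields only $\underline u\le \hat u_1,\hat u_2\le\overline u$, not $\hat u_1=\hat u_2$. Your alternative route is closer but overstates what convexity gives: since $\xi\mapsto|\xi+\xva|$ is positively $1$-homogeneous in $\xi+\xva$, minimality of $(\hat u_1+\hat u_2)/2$ forces $\nabla\hat u_1+\xva$ and $\nabla\hat u_2+\xva$ to be \emph{parallel with the same orientation} a.e., not equal, and no level-set structure of the characteristic set is invoked. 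The paper closes this gap with Proposition \ref{unicitapp'}: from parallelism one gets $(\nabla u+\xva)^*\cdot(\nabla v+\xva)=0$, and an integration by parts on $\{u>v\}$ using $\dive(\nabla u)^*=0$, $(\nabla u)^*\cdot\nabla u=0$ and $\dive\xvett=2n$ yields $\int_\Omega(u-v)^+=0$. This argument needs one of the two minimizers to lie in the dual Sobolev space $W^{1,p'}$, which is precisely why the Lipschitz regularity of $\overline u$ and $\underline u$ must be proved \emph{before} uniqueness; one then concludes $\overline u=\underline u$ and squeezes every other minimizer in between.
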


Notice that, if $\varphi$ satisfies the B.S.C., then it is Lipschitz
continuous on $\partial\Omega$: in this sense, our assumptions on
the boundary datum are stronger than those in \cite[Theorem A]{CH},
where the authors prove the existence and the continuity of $\BV$
minimizers on $C^{2,\alpha}$ parabolically convex domains assuming
only the continuity of the boundary datum. Nevertheless, we are able
to obtain stronger results (namely: uniqueness and Lipschitz
regularity of the minimizer) on (possibly) less regular domains. In fact our
result applies, in particular, when $\Omega$ is uniformly convex and
$\varphi$ is (the restriction to $\partial\Omega$ of) a function of
class $\mathcal{C}^{1,1}$: in this case, as previously
mentioned, $\varphi$ automatically satisfies the B.S.C..

 We conjecture that,
as \cite[Theorem A]{CH}, our main result holds as well for more general functionals.

We also want to stress a couple of interesting points concerning Theorem \ref{mainteo-ex6.7}. First, in contrast with the Semi-Classical Theory, the Lipschitz constant of the minimizer may a priori be greater than the constant $Q$ given by the bounded slope condition. Second, Theorem \ref{mainteo-ex6.7} is sharp at least in $\bbH^1$ in the sense that, even under the bounded slope condition, a minimizer might not be better than Lipschitz continuous, see Example \ref{es1}.

The proof of Theorem \ref{mainteo-ex6.7} is based on several intermediate results which have an independent interest. We mention, for instance, a Comparison Principle for minimizers, Theorem \ref{confro}, which in turn is based on the existence of the (pointwise a.e.) ``maximum'' and ``minimum'' in the family of minimizers, see Proposition \ref{esistmax}. The uniqueness in Theorem \ref{mainteo-ex6.7} is based on a criterion stated in Proposition \ref{unicitapp'} (for which we have to credit \cite{CHY}) and on the fact that affine functions are the unique minimizers under their own boundary datum, see Theorem \ref{affineMIN}.\medskip

The structure of the paper is the following. Section \ref{recallBV} contains basic facts about functions with bounded variation and their traces. In Section \ref{proparea} we recall several preliminary results about the functional $\area$. Section \ref{strumin} is devoted to the study of the set of minimizers and its structure, with particular regard to comparison principles. In Section \ref{uniqSec} we prove the uniqueness results in Proposition \ref{unicitapp'} and Theorem \ref{affineMIN}. Finally, Section \ref{ultSec} is devoted to the proof of Theorem \ref{mainteo-ex6.7}.

\section{Functions of Bounded Variation and traces}\label{recallBV}

Aim of this section is to recall some basic properties of the space of functions of bounded variation; we refer to the monographs \cite{ambfuspal,Giusti} for a more extensive account on the subject as well as for proofs of the results we are going to recall here.

Let $\Omega$  be an open set in $\bbR^n$. We say that $u\in L^1(\Omega)$ has {\em bounded variation} in $\Omega$ if
\begin{equation}\label{ilsup}
\sup\Big\{\int_{\Omega} u\ \dive \varphi\ \ud\LL^n:\varphi\in C^1_c(\Omega), \norm{\varphi}\leq 1\Big\}<+\infty;
\end{equation}
equivalently, if there exist a $\R^n$-valued Radon measure $Du:=(Du_1,\dots, Du_n)$ in $\Omega$ which represents the distributional derivative of $u$, i.e., if
\[
\int_{\Omega} u\frac{\partial\varphi}{\partial x_i}\ud\LL^n=-\int_{\Omega} \varphi\  \ud D_i u\quad \forall \varphi\in C^1_c(\Omega),\ \forall i=1,\ldots, n.
\]
The space of functions with bounded variation in $\Omega$ is denoted by $\BV(\Omega)$. By definition, $W^{1,1}(\Omega)\subset \BV(\Omega)$ and $Du=\nabla u\,\LL^n$ for any $u\in W^{1,1}(\Omega)$.

We denote by $|Du|$ the total variation of the measure $Du$; $|Du|$ defines a finite measure on $\Omega$ and the supremum in \eqref{ilsup} coincides with $|Du|(\Omega)$.
It is well-known that $\BV(\Omega)$ is a Banach space when endowed with the norm
\[
    \norm{u}_{\BV}:=\norm{u}_{L^1}+ |Du|(\Omega).
\]

By the Radon-Nikodym Theorem, if $u\in \BV(\Omega)$ one can write $Du=D^au+D^su$, where $D^au$ is the absolutely continuous part of $Du$ with respect to $\LL^n$ and $D^su$ is the singular part of $Du$ with respect to $\LL^n$. We denote by $\nabla u\in L^1(\Omega)$ the density of $D^au$ with respect to $\LL^n$, so that $D^au=\nabla u\,\LL^n$. It turns out that, if $u\in \BV(\Omega)$, then $u$ is {\em approximately differentiable} at a.e. $x\in\Omega$ with approximate differential $\nabla u(x)$, i.e.,
\[
    \lim_{\rho\to 0^+}\fint_{B(x,\rho)} \frac{|u(y)-\tilde u(x)-\left\langle \nabla u(x), y-x\right\rangle|}{\rho}\ \ud\LL^n=0\qquad\text{for $\LL^n$-a.e. }x\in\Omega\,.\medskip
\]
%

%

\medskip
We now recall a few basic facts about boundary trace properties for BV functions. Assume  that $\Omega\subset\R^n$ is a bounded open set with Lipschitz regular boundary; the spaces $L^p(\partial\Omega), p\in[1,+\infty]$, will be always understood with respect to the (finite) measure $\HH^{n-1}\res\partial\Omega$, where $\HH^{n-1}$ denotes the $(n-1)$-dimensional {\em Hausdorff measure} on $\R^n$ (see again \cite{ambfuspal} or \cite{Giusti}). It is well-known that for any $u\in \BV(\Omega)$ there exists a (unique) function $u_{|\partial\Omega}\in L^1(\partial\Omega)$ such that, for $\HH^{n-1}$-a.e. $x\in \partial\Omega$,
\[
\lim_{\rho\to0^+}\rho^{-n} \int_{\Omega\cap B(x,\rho)}|u-u_{|\partial\Omega}(x)|\ud\LL^n=\lim_{\rho\to0^+}\fint_{\Omega\cap B(x,\rho)}|u-u_{|\partial\Omega}(x)|\ud\LL^n=0\,.
\]
The function $u_{|\partial\Omega}$
 is called the {\em trace} of $u$ on $\partial\Omega$. The trace operator $u\mapsto u_{|\partial\Omega}$ is linear and continuous between $(\BV(\Omega),\norm\cdot_{\BV})$ and $L^1(\partial\Omega)$; actually, it is continuous also when $\BV(\Omega)$ is endowed with the (weaker) topology induced by the so-called {\em strict convergence}, see \cite[Definition 3.14]{ambfuspal}.

\begin{oss}\label{traccesupinf}{\rm
It is well-known that, if $u_1,u_2\in \BV(\Omega)$, then $\overline u:=\max\{u_1,u_2\}$ and $\underline u:=\min\{u_1,u_2\}$ belong to $\BV(\Omega)$; moreover, one can show that
\[
\overline u_{|\partial\Omega}=\max\{u_{1|\partial\Omega},u_{2|\partial\Omega}\},\qquad \underline u_{|\partial\Omega}=\min\{u_{1|\partial\Omega},u_{2|\partial\Omega}\}\,.
\]
The proof of this fact follows in a standard way from the very definition of traces.
}\end{oss}

Since $Du\ll|Du|$ we can write $Du=\sigma_u|Du|$ for a $|Du|$-measurable function $\sigma_u:\Omega\to\bbS^{n-1}$. With this notation, one has also
\begin{equation}\label{defvartr}
\int_\Omega u\,\dive\varphi\ud\LL^n = -\int_\Omega \langle \sigma_u,\varphi\rangle \ud |Du| + \int_{\partial\Omega} u_{|\partial\Omega}\, \langle\varphi,\nu_\Omega\rangle \ud\HH^{n-1}\qquad\forall \varphi\in C^1_c(\R^n,\R^n)
\end{equation}
where $\nu_\Omega$ is the unit outer normal to $\partial\Omega$.

Finally, we recall the following fact, whose proof stems from \eqref{defvartr}.

\begin{prop}[{\cite[Remark 2.13]{Giusti}}]\label{tracceatratti}
Assume that $\Omega$ and $\Omega_0$ are open subsets of $\R^n$ with bounded Lipschitz boundary and such that $\Omega\Subset\Omega_0$. If $u\in \BV(\Omega)$ and $v\in \BV(\Omega_0\setminus\overline\Omega)$, then the function
\[
f(x):=
\left\{\begin{array}{ll}
u(x) & \text{if }x\in\Omega\\
v(x) & \text{if }x\in\Omega_0\setminus\overline\Omega
\end{array}\right.
\]
belongs to $\BV(\Omega_0)$ and
\[
|Df|(\partial\Omega)= \int_{\partial\Omega}|u_{|\partial\Omega}-v_{|\partial\Omega}|\ud\HH^{n-1}\,,
\]
where we have used the notation $v_{|\partial\Omega}$ to mean $(v_{|\partial(\Omega_0\setminus\overline\Omega)})\res{\partial\Omega}$.
\end{prop}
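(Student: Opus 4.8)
The plan is to identify the distributional derivative $Df$ explicitly as a finite Radon measure on $\Omega_0$, by testing against vector fields $\varphi\in C^1_c(\R^n,\R^n)$ with $\supp\varphi\subset\Omega_0$ and applying the integration-by-parts formula \eqref{defvartr} on each of the two pieces $\Omega$ and $\Omega':=\Omega_0\setminus\overline\Omega$ separately. First I would record the elementary geometric facts underlying this: since $\Omega\Subset\Omega_0$ and $\partial\Omega,\partial\Omega_0$ are Lipschitz, the set $\Omega'$ is open with bounded Lipschitz boundary and $\partial\Omega'=\partial\Omega\sqcup\partial\Omega_0$ (a disjoint union, as $\overline\Omega\cap\partial\Omega_0=\emptyset$); consequently $v$ has a trace $v_{|\partial\Omega}\in L^1(\partial\Omega,\HH^{n-1})$ — this is precisely the meaning attached to the symbol in the statement — and the outer unit normal of $\Omega'$ along $\partial\Omega$ equals $-\nu_\Omega$. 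I would also note that $\LL^n(\partial\Omega)=0$, so that $f\in L^1(\Omega_0)$ and $\int_{\Omega_0}g\ud\LL^n=\int_\Omega g\ud\LL^n+\int_{\Omega'}g\ud\LL^n$ for every integrable $g$.

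Next, for such $\varphi$, I would apply \eqref{defvartr} to $u$ on $\Omega$ and to $v$ on $\Omega'$, split $\int_{\Omega_0}f\,\dive\varphi\ud\LL^n$ accordingly, and observe that the boundary term produced on $\partial\Omega_0$ drops out (since $\varphi$ vanishes in a neighbourhood of $\partial\Omega_0$) while the two boundary terms on $\partial\Omega$ combine, thanks to the opposite normals, into $\int_{\partial\Omega}(u_{|\partial\Omega}-v_{|\partial\Omega})\langle\varphi,\nu_\Omega\rangle\ud\HH^{n-1}$. This yields the identity $\int_{\Omega_0}f\,\dive\varphi\ud\LL^n=-\int_{\Omega_0}\langle\psi,\varphi\rangle\ud\mu$, valid for all such $\varphi$, where
\[
\mu:=|Du|\res\Omega+|Dv|\res\Omega'+|u_{|\partial\Omega}-v_{|\partial\Omega}|\,\HH^{n-1}\res\partial\Omega
\]
is a finite positive Radon measure on $\Omega_0$ (finiteness uses $u\in\BV(\Omega)$, $v\in\BV(\Omega')$ and $u_{|\partial\Omega},v_{|\partial\Omega}\in L^1(\partial\Omega)$ together with $\HH^{n-1}(\partial\Omega)<\infty$), and $\psi$ is a $\mu$-measurable vector field with $|\psi|=1$ $\mu$-a.e.\ (equal to $\sigma_u$ on $\Omega$, to $\sigma_v$ on $\Omega'$, and to $\pm\nu_\Omega$ on $\partial\Omega$, the sign being immaterial). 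By the Riesz representation theorem this says exactly that $f\in\BV(\Omega_0)$ with $Df=\psi\,\mu$, whence $|Df|=|\psi|\,\mu=\mu$; restricting to $\partial\Omega$ gives the asserted formula $|Df|(\partial\Omega)=\mu(\partial\Omega)=\int_{\partial\Omega}|u_{|\partial\Omega}-v_{|\partial\Omega}|\ud\HH^{n-1}$.

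The step requiring the most care is the handling of the boundary contributions in \eqref{defvartr} for the domain $\Omega'$: one must check that $\Omega'$ genuinely falls within the scope of that formula (Lipschitz regularity of $\partial\Omega'$, identification of its two boundary components, orientation of the outer normal along $\partial\Omega$), and that the trace of $v$ appearing there restricts on $\partial\Omega$ to the one named in the statement. Once this is settled, the remaining steps — the algebraic combination of the boundary integrals, the verification that $\mu$ is finite, and the passage from the weak identity to the conclusion $|Df|=\mu$ via the polar decomposition of the vector measure $Df$ — are routine.
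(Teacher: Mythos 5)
Your argument is correct and is exactly the one the paper indicates: it states that the proof ``stems from \eqref{defvartr}'' (citing Giusti, Remark 2.13), and your proposal carries this out by applying \eqref{defvartr} separately on $\Omega$ and on $\Omega_0\setminus\overline\Omega$, cancelling the $\partial\Omega_0$ term, combining the two $\partial\Omega$ terms via the opposite normals, and reading off $|Df|\res\partial\Omega$ from the mutual singularity of the three pieces of $Df$. The technical points you flag (Lipschitz regularity of $\partial(\Omega_0\setminus\overline\Omega)$, identification of its boundary components and normals, finiteness of the resulting measure) are the right ones and are handled correctly.
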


\section{The area functional for \texorpdfstring{$t$}{t}-graphs in the Heisenberg group}\label{proparea}
Before introducing the area functional $\area$ with more details, we need some preliminary notation. For $z:=(x,y)\in\R^{n}\times\R^n$ we define
\[
z^\ast:=(-y,x)\in\R^{2n}\,.
\]
Let us state some useful property of the map $z\mapsto z^\ast$.

\begin{lem}\label{starop}
The following properties hold:
\begin{enumerate}
    \item[(i)] if $z_1,z_2\in\bbR^{2n}$ are linearly dependent, then $z_1\cdot z_2^*=0$;
    \item[(ii)] $z_1\cdot z_2=z_1^*\cdot z_2^*$ for each $z_1,z_2\in\bbR^{2n}$;
    \item[(iii)] if $\Omega\subset\R^{2n}$ is open and $f\in C^{\infty}(\Omega)$, then $\dive(\nabla f)^*=0$ on $\Omega$.
\end{enumerate}
\end{lem}
\begin{proof}
The first two statements are straightforward. To prove (iii), observe that
$(\nabla f)^*=(-\partial_{n+1} f,\ldots, -\partial_{2n} f, \partial_1 f,\ldots, \partial_n f)$, thus
\[
  \dive(\nabla f)^*=-\sum_{i=1}^n \partial_{i}\partial_{n+i} f+\sum_{i=1}^n \partial_{n+i}\partial_i f = 0\,.
\]
\end{proof}


Given an open set $\Omega\subset\R^{2n}$ we define the convex functional $\area_\Omega:\BV(\Omega)\to\R$
\[
\area_\Omega(u):=\int_\Omega |\nabla u+ \xva|\ud\LL^{2n}+|D^su|(\Omega)\,,
\]
where $\xvett(z):=2z$ and, clearly, $\xva(z)=2z^*$.

When the open set $\Omega$ is clear from the context, we will simply write $\area$ instead of $\area_\Omega$. Using the standard identification of the Heisenberg group $\bbH^n$ with $\R^{2n}_z\times\R_t$, there holds
\begin{equation}\label{areaperim}
\area(u)=|\partial E^t_u|_\bbH(\Omega\times\R)\,,
\end{equation}
where $|\partial E^t_u|_\bbH(\Omega\times\R)$ denotes the {\em $\bbH$-perimeter} in $\Omega\times\R\subset\bbH^n$ of the $t$-subgraph
\[
E^t_u:=\{(z,t)\in\bbH^n: z\in\Omega,\ t<u(z)\}
\]
of $u$. See \cite{SCV} for more details. It was proved in \cite{SCV} that $\area$ is lower semicontinuous with respect to the $L^1$-convergence and
\[
\area(u)=\inf \left\{\liminf_{j\to\infty} \int_\Omega |\nabla u_j+ \xva|\ud\LL^{2n}: (u_j)_{j\in\bbN}\subset C^1(\Omega),u_j\to u\text{ in } L^1(\Omega)\right\}\,.
\]

The following approximation result holds

\begin{prop}\label{appros}
Let $\Omega\subset\bbR^{2n}$ be a bounded open set with Lipschitz boundary. Let $u\in \BV(\Omega)$ with $u_{|\partial\Omega}=\varphi\in L^1(\Omega)$; then there exists a sequence $(u_k)_k\subset C^{\infty}(\Omega)$ converging to $u$ in $L^1(\Omega)$ and such that
\begin{align}
    &(u_{k})_{|\partial\Omega}=\varphi\quad \forall k\in\bbN,\label{7777}\\
    &\area_{\Omega}(u)=\lim_{k\to\infty}\int_{\Omega}|\nabla u_k+\xva|\ud\LL^{2n}.\label{8888}
\end{align}
\end{prop}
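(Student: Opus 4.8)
The plan is to combine two standard tools: a mollification/approximation argument for $\BV$ functions that respects the trace, and the strict-convergence approximation result stated in the excerpt. First I would recall the known fact (see \cite{ambfuspal,Giusti}) that, since $\Omega$ has Lipschitz boundary, for any $u\in\BV(\Omega)$ with $u_{|\partial\Omega}=\varphi$ there is a sequence $(v_k)_k\subset C^\infty(\Omega)$ with $v_k\to u$ in $L^1(\Omega)$, $|Dv_k|(\Omega)\to|Du|(\Omega)$ (strict convergence) and, crucially, $(v_k)_{|\partial\Omega}=\varphi$ for every $k$. This is the ``smoothing without changing the trace'' statement; its proof proceeds by cutting off near $\bo\Omega$, keeping a thin collar where $v_k$ equals $u$ reflected/extended so the trace is preserved, and mollifying in the interior, then exploiting continuity of the trace operator with respect to strict convergence (which is exactly the property emphasized at the end of Section \ref{recallBV}).

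Next I would upgrade strict convergence of $|Dv_k|$ to the convergence of the area functional. The point is that $\area_\Omega(u)=\int_\Omega|\nabla u+\xva|\ud\LL^{2n}+|D^su|(\Omega)$ is, up to the fixed vector field $\xva$, a perturbation of the total variation. One clean way is to invoke \eqref{areaperim}: $\area_\Omega(u)=|\bo E^t_u|_\bbH(\Omega\times\R)$, so it suffices to show that $|\bo E^t_{v_k}|_\bbH(\Omega\times\R)\to|\bo E^t_u|_\bbH(\Omega\times\R)$. Lower semicontinuity of $\area$ with respect to $L^1$-convergence (recalled just before Proposition \ref{appros}) gives $\area_\Omega(u)\le\liminf_k\int_\Omega|\nabla v_k+\xva|\ud\LL^{2n}$; for the reverse inequality I would bound $\int_\Omega|\nabla v_k+\xva|\,\ud\LL^{2n}\le\int_\Omega|\nabla v_k|\,\ud\LL^{2n}+2\int_\Omega|z|\,\ud\LL^{2n}$ is too lossy, so instead I would argue more carefully: write $\int_\Omega|\nabla v_k+\xva|\le|Dv_k|(\Omega)+ \int_\Omega(|\nabla v_k+\xva|-|\nabla v_k|)$, note the integrand of the last term is bounded by $|\xva|=2|z|\le C$ on the bounded set $\Omega$ and, using that $v_k\to u$ in $L^1$ forces (along a subsequence) $\nabla v_k\to\nabla u$ in a weak-measure sense on the part where $Du$ is absolutely continuous while the singular part of $|Dv_k|$ concentrates, one gets $\limsup_k\int_\Omega|\nabla v_k+\xva|\ud\LL^{2n}\le \int_\Omega|\nabla u+\xva|\ud\LL^{2n}+|D^su|(\Omega)=\area_\Omega(u)$. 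Combined with lower semicontinuity this yields \eqref{8888}, and \eqref{7777} is built into the construction.

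I expect the main obstacle to be precisely this last step: transferring strict convergence $|Dv_k|(\Omega)\to|Du|(\Omega)$ into convergence of $\int_\Omega|\nabla v_k+\xva|$, because the map $\mu\mapsto\int|\mu+\xva\LL^{2n}|$ is not continuous under mere weak-$*$ convergence of measures (it is only lower semicontinuous), and one has to exploit the specific structure of strict convergence — namely that not only does $|Dv_k|\rightharpoonup$ something of the same total mass, but in fact $Dv_k\rightharpoonup Du$ weakly-$*$ together with $|Dv_k|(\Omega)\to|Du|(\Omega)$, which by a classical lemma (e.g. Reshetnyak continuity theorem) gives $\int_\Omega g(z)\,\ud|Dv_k+\xva\LL^{2n}|\to\int_\Omega g(z)\,\ud|Du+\xva\LL^{2n}|$ for continuous bounded $g$, hence in particular (taking $g\equiv1$ after checking the vector field contribution) the desired convergence. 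Reshetnyak's theorem is the natural black box here; alternatively one can bypass it by the cruder $L^1$-lower-semicontinuity plus $\limsup$ estimate sketched above, which is self-contained given the results already recalled. Either way, verifying \eqref{7777} is routine from the construction, and the only genuinely delicate analytic input is the Reshetnyak-type continuity handling the additive perturbation $\xva$.
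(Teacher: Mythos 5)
Your overall strategy (a trace-preserving smooth approximation followed by passage to the limit in the area functional) is the same in spirit as the paper's, which simply invokes the mollification construction of \cite[Theorem 3.2, Step 4]{SCV} to get \eqref{8888} and then checks, as in \cite[Remark 1.18]{Giusti}, that this particular construction preserves the trace, giving \eqref{7777}. However, your key analytic step contains a genuine gap: you try to deduce \eqref{8888} from \emph{strict} convergence, i.e.\ from $v_k\to u$ in $L^1(\Omega)$ together with $|Dv_k|(\Omega)\to|Du|(\Omega)$, via Reshetnyak's continuity theorem. That implication is false. To treat the inhomogeneous integrand $|\xi+\xva(z)|$ with Reshetnyak you must lift to the $\R^{2n+1}$-valued measures $\mu_k:=(Dv_k,\LL^{2n})$ and the positively $1$-homogeneous integrand $f(z,(p,t)):=|p+t\,\xva(z)|$; the theorem then requires $|\mu_k|(\Omega)=\int_\Omega\sqrt{1+|\nabla v_k|^2}\,\ud\LL^{2n}\to|(Du,\LL^{2n})|(\Omega)$, the so-called \emph{area-strict} convergence, which is strictly stronger than strict convergence and is not delivered by the generic ``smoothing without changing the trace'' black box you start from. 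A one-dimensional picture shows the failure: smooth monotone approximations of a staircase converge strictly to $u(x)=x$ on $(0,1)$ (so $|Dv_k|\to 1=|Du|$), yet $\int\sqrt{1+|v_k'|^2}\to 2\neq\sqrt2$; transplanted to the present setting, for such a sequence $\int_\Omega|\nabla v_k+\xva|\ud\LL^{2n}$ tends to $\int_\Omega|\xva|\ud\LL^{2n}+|Du|(\Omega)$ rather than to $\area_\Omega(u)$. Your fallback ``self-contained'' $\limsup$ argument has the same problem: smooth functions have no singular part, and the assertion that $\nabla v_k\to\nabla u$ where $Du$ is absolutely continuous while ``the singular part of $|Dv_k|$ concentrates'' is precisely what an unspecified strictly convergent sequence need not satisfy.

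The repair is to use the \emph{specific} approximating sequence of \cite[Theorem 3.2]{SCV}, which is engineered exactly so that \eqref{8888} holds (that theorem identifies $\area_\Omega$ as the $L^1$-relaxation and produces a recovery sequence), and then to observe, as the paper does, that condition (3.5) of that construction forces
\[
\lim_{\rho\to0^+}\rho^{-2n}\int_{\Omega\cap B(z,\rho)}|u-u_k|\ud\LL^{2n}=0
\qquad\text{for $\HH^{2n-1}$-a.e.\ }z\in\partial\Omega,
\]
whence $(u_k)_{|\partial\Omega}=u_{|\partial\Omega}=\varphi$ by the very definition of the trace. Alternatively, if you insist on a black box, you must quote a trace-preserving approximation that converges \emph{area-strictly}; with that stronger input your Reshetnyak argument does go through.
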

\begin{proof}
The existence of a sequence $(u_k)_k\subset C^{\infty}(\Omega)$ converging to $u$ in $L^1(\Omega)$ and such that \eqref{8888} holds was proved in \cite[Theorem 3.2]{SCV}, see also \cite[Corollary 3.3]{SCV}. More precisely, this sequence was constructed in Step 4 of the proof of \cite[Theorem 3.2]{SCV} by imposing certain conditions on suitably mollified functions, see formulae (3.5)-(3.7) therein. Reasoning as in \cite[Remark 1.18]{Giusti}, it can be proved that condition (3.5) of \cite{SCV} implies that
\[
\lim_{\rho\to 0^+} \rho^{-n} \int_{\Omega\cap B(z,\rho)}|u-u_k|\ud\LL^{2n}=0\quad\text{for $\HH^{2n-1}$-a.e. }z\in\partial\Omega
\]
and \eqref{7777} follows from the definition of traces.
\end{proof}

We are interested in the existence of minimizers for $\area$ under prescribed boundary conditions. Assuming $\Omega$ to be a bounded domain with Lipschitz boundary, we consider
\begin{align}
    M_1:=\inf\{\area(u):\,u\in \BV(\Omega),u_{|\partial\Omega}=\,\varphi\}.
\end{align}
It is known that the infimum $M_1$ might not be attained even with $\Omega$ and $\varphi$ smooth, see \cite[Example 3.6]{SCV}.

On the other hand, one can consider the functional
\[
\area_{\varphi,\Omega}(u):=\area_\Omega(u) + \int_{\partial\Omega}|u_{|\partial\Omega}-\varphi|\ud\HH^{2n-1}\,
\]
where the integral on the right hand side can be seen as a penalization for $u$ not taking the boundary value $\varphi$; this penalization is natural from the viewpoint of the geometry of $\bbH^n$ as shown in \cite[Remark 3.8]{SCV}. Again,  we will simply write $\area_\varphi$ instead of $\area_{\varphi,\Omega}$ when the open set $\Omega$ is clear from the context. By using the Direct Method of the Calculus of Variations (see again \cite{SCV}), it can be shown that the problem
\[
    M_2:=\min\{\area_{\varphi}(u):\,u\in \BV(\Omega)\}
\]
admits always a solution.

Let us show that the Lavrentiev phenomenon does not occur for our minimization problem.

\begin{prop}
Let $\Omega\subset\bbR^{2n}$ be a bounded open set with Lipschitz regular boundary and $\varphi$ be in $L^1(\partial\Omega)$; then, setting
\[
M_3:=\inf\{\area(u):\,u\in C^\infty(\Omega)\cap W^{1,1}(\Omega),u_{|\partial\Omega}=\,\varphi\}
\]
we have $M_1=M_2=M_3$, where $M_1$ and $M_2$ are defined above.
\end{prop}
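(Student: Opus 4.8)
The plan is to prove the two chains $M_2\le M_1\le M_3$ and $M_3\le M_1\le M_2$, which together force $M_1=M_2=M_3$.

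Two of these inequalities are immediate. If $u\in C^\infty(\Omega)\cap W^{1,1}(\Omega)$ and $u_{|\partial\Omega}=\varphi$, then $u\in\BV(\Omega)$ with $D^su=0$, so $\area_\Omega(u)=\int_\Omega|\nabla u+\xva|\ud\LL^{2n}$ and $u$ is admissible in the problem defining $M_1$; taking the infimum over such $u$ gives $M_1\le M_3$. Likewise, every $u$ admissible for $M_1$ satisfies $\area_\varphi(u)=\area_\Omega(u)$, hence $M_2\le\area_\Omega(u)$ and so $M_2\le M_1$. For $M_3\le M_1$ I would use Proposition~\ref{appros}: given $u\in\BV(\Omega)$ with $u_{|\partial\Omega}=\varphi$, it produces $u_k\in C^\infty(\Omega)$ with $(u_k)_{|\partial\Omega}=\varphi$ and $\int_\Omega|\nabla u_k+\xva|\ud\LL^{2n}\to\area_\Omega(u)$; since $\Omega$ is bounded, $\xva\in L^\infty(\Omega)$, hence $\nabla u_k\in L^1(\Omega)$ and $u_k\in W^{1,1}(\Omega)$ is admissible for $M_3$, so $M_3\le\int_\Omega|\nabla u_k+\xva|\ud\LL^{2n}\to\area_\Omega(u)$ and $M_3\le M_1$. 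In particular $M_1=M_3$.

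It remains to prove $M_1\le M_2$, and for this it suffices to show that for every $v\in\BV(\Omega)$ and every $\varepsilon>0$ there is $u\in\BV(\Omega)$ with $u_{|\partial\Omega}=\varphi$ and $\area_\Omega(u)\le\area_\varphi(v)+\varepsilon$ (then $M_1\le\area_\varphi(v)+\varepsilon$, and letting $v$ vary and $\varepsilon\to0$ gives the claim). The construction keeps $v$ unchanged well inside $\Omega$ and forces the boundary value $\varphi$ in a thin interior layer. Fix a regularized distance $\rho\in C^\infty(\Omega)\cap C^0(\overline\Omega)$ (a smooth function comparable to $\dist(\cdot,\partial\Omega)$, positive in $\Omega$ and vanishing on $\partial\Omega$); by Sard's theorem, for a.e.\ $t>0$ the set $\Omega_t:=\{\rho>t\}$ is open with smooth boundary and $\Omega_t\Subset\Omega$, and $\Omega_t\uparrow\Omega$ as $t\downarrow0$. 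Using the surjectivity of the trace operator $\BV(\Omega)\to L^1(\partial\Omega)$, fix once and for all $g\in\BV(\Omega)$ with $g_{|\partial\Omega}=\varphi$, and for a.e.\ small $t>0$ set $u_t:=v$ on $\Omega_t$ and $u_t:=g$ on $\Omega\setminus\overline{\Omega_t}$. By Proposition~\ref{tracceatratti}, $u_t\in\BV(\Omega)$, and $(u_t)_{|\partial\Omega}=\varphi$ because $u_t=g$ near $\partial\Omega$; moreover, since $\LL^{2n}(\partial\Omega_t)=0$,
\[
\area_\Omega(u_t)=\area_{\Omega_t}(v)+\int_{\partial\Omega_t}|v_{|\partial\Omega_t}-g_{|\partial\Omega_t}|\ud\HH^{2n-1}+\area_{\Omega\setminus\overline{\Omega_t}}(g).
\]

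The conclusion follows by letting $t\to0^+$. The term $\area_{\Omega_t}(v)$ increases to $\area_\Omega(v)$ by monotone convergence, since $\Omega_t\uparrow\Omega$ and both set functions $E\mapsto\int_E|\nabla v+\xva|\ud\LL^{2n}$ and $E\mapsto|D^sv|(E)$ are continuous from below; and $\area_{\Omega\setminus\overline{\Omega_t}}(g)\to0$ because $\area_\Omega(g)<\infty$ while $\Omega\setminus\overline{\Omega_t}$ shrinks to the $\LL^{2n}$-null set $\partial\Omega$. The delicate point, where the real work lies, is to show that
\[
\int_{\partial\Omega_t}|v_{|\partial\Omega_t}-g_{|\partial\Omega_t}|\ud\HH^{2n-1}\ \longrightarrow\ \int_{\partial\Omega}|v_{|\partial\Omega}-\varphi|\ud\HH^{2n-1}\qquad\text{as }t\to0^+.
\]
This rests on the classical fact that the restrictions of a $\BV$ (or $W^{1,1}$) function to the level sets $\{\rho=t\}$ converge in $L^1$ to its boundary trace as $t\to0^+$, once $\{\rho=t\}$ is identified with $\partial\Omega$ through the flow of $\nabla\rho/|\nabla\rho|^2$ (see \cite{Giusti,ambfuspal}); applying this to both $v$ and $g$, using $g_{|\partial\Omega}=\varphi$, the triangle inequality, and the fact that the Jacobian of the identification converges uniformly to $1$, one obtains the displayed limit. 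Hence $\area_\Omega(u_t)\to\area_\varphi(v)$, so $\area_\Omega(u_t)\le\area_\varphi(v)+\varepsilon$ for $t$ small, which proves $M_1\le M_2$. Combined with $M_1=M_3$ and $M_2\le M_1$, this gives $M_1=M_2=M_3$.
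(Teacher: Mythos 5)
Your handling of $M_2\le M_1\le M_3$ and of $M_3\le M_1$ via Proposition~\ref{appros} coincides with the paper's argument. Where you genuinely diverge is the remaining inequality $M_1\le M_2$: the paper disposes of it in one line by quoting \cite[Theorem 1.4]{SCV}, whereas you reprove it from scratch by gluing an arbitrary competitor $v$ to a fixed $\BV$ extension $g$ of $\varphi$ along the level sets $\partial\Omega_t$ of a regularized distance and letting $t\to0^+$. Your decomposition of $\area_\Omega(u_t)$ via Proposition~\ref{tracceatratti} is correct, and the limits $\area_{\Omega_t}(v)\to\area_\Omega(v)$ and $\area_{\Omega\setminus\overline{\Omega_t}}(g)\to0$ follow as you say from continuity from below/above of finite measures. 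The step you leave as a sketch --- the convergence of $\int_{\partial\Omega_t}|v_{|\partial\Omega_t}-g_{|\partial\Omega_t}|\ud\HH^{2n-1}$ to $\int_{\partial\Omega}|v_{|\partial\Omega}-\varphi|\ud\HH^{2n-1}$ --- is precisely the content that \cite[Theorem 1.4]{SCV} encapsulates, and it is classical but not free: for a merely Lipschitz $\partial\Omega$ the gradient of a regularized distance need not be bounded away from zero, so the flow of $\nabla\rho/|\nabla\rho|^2$ is not obviously a bi-Lipschitz identification of $\partial\Omega$ with $\partial\Omega_t$ with Jacobian tending to $1$; the safe route is the local graph description of $\partial\Omega$, vertical translations, and a partition of unity (as in the proofs of the trace theorem in \cite{Giusti} or \cite{ambfuspal}), applied to the single function $h:=|v-g|\in\BV(\Omega)$, whose inner trace on $\partial\Omega_t$ equals $|v_{|\partial\Omega_t}-g_{|\partial\Omega_t}|$ for a.e.\ $t$. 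Note also that you do not need full convergence of the boundary term: it suffices that its $\liminf$ as $t\to0^+$ be at most $\int_{\partial\Omega}|v_{|\partial\Omega}-\varphi|\ud\HH^{2n-1}$, i.e.\ one good value of $t$, which is strictly less than what you claim and is easier to extract from the classical estimates. Modulo that technical point your argument is complete and has the merit of being self-contained; the paper's proof is shorter only because it outsources exactly this step to \cite{SCV}.
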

\begin{proof}
Clearly, one has $M_3\ge M_1\ge M_2$ because
\[
\{u\in C^{\infty}(\Omega)\cap W^{1,1}(\Omega): u_{|\partial\Omega}=\,\varphi\}\subset\{u\in \BV(\Omega):u_{|\partial\Omega}=\,\varphi\}\subset \BV(\Omega)
\]
and $\area_{\varphi}$ coincides with $\area$ on $\{u\in \BV(\Omega):u_{|\partial\Omega}=\,\varphi\}$.
Let $u\in\BV(\Omega)$ with $u_{|\partial\Omega}=\varphi$ and consider $(u_k)_k\subset C^{\infty}(\Omega)$ as in Proposition \ref{appros}. Then
\[
M_3\leq \area(u) = \lim_{k\to\infty}\area(u_k),
\]
which implies $M_3\leq M_1$ and hence $M_3=M_1$. Finally, the equality $M_1=M_2$ has been established in {\cite[Theorem 1.4]{SCV}}, and the proof is accomplished.
\end{proof}

\begin{oss}{\rm
One can easily show that, if the boundary datum $\varphi$ is Lipschitz continuous on $\partial\Omega$, then the equalities
\[
M_1=M_2=M_3=\inf\{\area(u):\,u\in \lip(\Omega),u_{|\partial\Omega}=\,\varphi\}
\]
hold.
}\end{oss}

\section{The set of minimizers and Comparison Principles}\label{strumin}

The aim of this section is to establish a Comparison Principle for
minimizers of the area functional with the penalization on the boundary. It
is well-known that Comparison Principles are strictly related with
uniqueness of solutions and that functionals defined in $\BV$ do not
have, in general, uniqueness of minimizers, even in the case of a
strictly convex lagrangian. In our case, the lagrangian
$f(z,\xi):=|\xi+\xva(z)|$ is not even strictly convex. The validity of
Comparison Principles for non strictly convex functional has been
studied in \cite{Cel,CMT,Mari,MTccm,MThrcv} in the case of superlinear
growth, and it has been proved for special classes of minimizers. In
this section we follow the same ideas, but we have to overcome some
new difficulties that are related both with the properties of $\BV$
and with the fact  that the lagrangian depends also on the variable $z$.

The main Comparison Principle is stated in Theorem \ref{confro}. Its proof is based on several steps and, in particular, it relies on some inequalities that, in our opinion, have an interest on their own. For this reason, we enunciate them as separate propositions. The proof of Theorem \ref{confro} will then follow in a few lines.

We remark that similar results for functionals with linear growth, depending just on the gradient and defined in the space of function of bounded variation, have been recently obtained in \cite{TreuBV}.

The next two propositions state two inequalities between the values of the area functional at $u_1$, $u_2$, $u_1\vee u_2$ and $u_1\wedge u_2$. The first one is stated for the area functional and the second one is for the functional with the penalization on the boundary. We observe that, when one deals with integral functionals defined in Sobolev spaces, these inequalities turn out to be equalities, whose proof is straightforward (see Lemma 5.1 in \cite{MThrcv}).

\begin{prop}\label{stimaMIN}
Let $\Omega\subset\bbR^{2n}$ be a bounded open set with Lipschitz
boundary. Let $u_1,u_2\in \BV(\Omega)$ be fixed. Then
\begin{align}\label{ineqfin}
    \area(u_1\vee u_2)+\area(u_1\wedge u_2)\leq \area(u_1)+\area(u_2)\,.
\end{align}
\end{prop}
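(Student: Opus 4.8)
The plan is to reduce the inequality \eqref{ineqfin} to a pointwise (or rather measure-theoretic) comparison of the two sides, treating the absolutely continuous part and the singular part separately. Write $v:=u_1\vee u_2$ and $w:=u_1\wedge u_2$. The first step is to recall the standard fact about lattice operations on $\BV$: on the set $\{u_1>u_2\}$ one has, in the sense of approximate gradients, $\nabla v=\nabla u_1$ and $\nabla w=\nabla u_2$ a.e., and symmetrically on $\{u_1<u_2\}$; on the set $\{u_1=u_2\}$ (more precisely on the set where the approximate values coincide) one has $\nabla u_1=\nabla u_2$ a.e., hence $\nabla v=\nabla w=\nabla u_1=\nabla u_2$ there. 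Consequently, for the absolutely continuous parts,
\[
\int_\Omega|\nabla v+\xva|\ud\LL^{2n}+\int_\Omega|\nabla w+\xva|\ud\LL^{2n}=\int_\Omega|\nabla u_1+\xva|\ud\LL^{2n}+\int_\Omega|\nabla u_2+\xva|\ud\LL^{2n},
\]
i.e. the absolutely continuous parts of the two sides of \eqref{ineqfin} actually coincide. So the whole inequality is really a statement about the singular parts: it suffices to prove $|D^sv|(\Omega)+|D^sw|(\Omega)\le|D^su_1|(\Omega)+|D^su_2|(\Omega)$.

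Next I would handle the singular parts. Here I expect to invoke (or reprove, following \cite{TreuBV} and the standard references \cite{ambfuspal,Giusti}) the known behaviour of $Dv$ and $Dw$ under max and min. One clean route: for every $u\in\BV(\Omega)$ and a.e. $s\in\R$ the superlevel set $\{u>s\}$ has finite perimeter and, by the coarea formula, $|D^su|(\Omega)=\int_\R |D^s\chi_{\{u>s\}}|(\Omega)\,\ud s$ where $|D^s\chi|$ denotes the part of the perimeter measure not absolutely continuous with respect to $\LL^{2n}$ — or, more simply, one uses the full coarea formula $|Du|(\Omega)=\int_\R P(\{u>s\},\Omega)\,\ud s$ and the pointwise identities $\{v>s\}=\{u_1>s\}\cup\{u_2>s\}$, $\{w>s\}=\{u_1>s\}\cap\{u_2>s\}$. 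The submodularity of perimeter,
\[
P(A\cup B,\Omega)+P(A\cap B,\Omega)\le P(A,\Omega)+P(B,\Omega),
\]
valid for all sets of finite perimeter, then gives after integration in $s$ that $|Dv|(\Omega)+|Dw|(\Omega)\le|Du_1|(\Omega)+|Du_2|(\Omega)$; subtracting the (equal) absolutely continuous contributions identified in the previous paragraph yields the desired inequality for the singular parts, and hence \eqref{ineqfin}. Alternatively one can argue directly at the level of the derivatives using the decomposition of $Du$ into jump and Cantor parts and the known chain-rule/locality of $\BV$ under $\max$ and $\min$, but the coarea approach is the most economical.

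The main obstacle I anticipate is making the splitting of the two sides rigorous on the set $\{u_1=u_2\}$ and, more generally, ensuring that the cancellation of absolutely continuous parts is legitimate: one must be careful that $\{u_1>u_2\}$, $\{u_1<u_2\}$, $\{u_1=u_2\}$ are to be understood up to $\LL^{2n}$-null sets via the approximate representatives, and that the singular measures $D^sv,D^sw$ can be genuinely charged on $\partial^*\{u_1>u_2\}$ in a way not controlled by $D^su_1,D^su_2$ individually — this is exactly why \eqref{ineqfin} is only an inequality and not an equality, in contrast to the Sobolev case. The submodularity of perimeter is precisely the tool that absorbs this extra interface contribution with the correct sign, so the crux is to phrase everything through perimeters of superlevel sets (where submodularity is a clean, known fact) rather than trying to manipulate $D^sv$ and $D^sw$ directly. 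Once the reduction to submodularity of perimeter is in place, the proof is short.
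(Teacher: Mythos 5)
Your proof is correct, but it takes a genuinely different route from the paper's. The paper's argument is two lines: by \eqref{areaperim} the functional $\area(u)$ coincides with the $\bbH$-perimeter of the $t$-subgraph $E^t_u$ in $\Omega\times\R$; since $E^t_{u_1\vee u_2}=E^t_{u_1}\cup E^t_{u_2}$ and $E^t_{u_1\wedge u_2}=E^t_{u_1}\cap E^t_{u_2}$, the inequality \eqref{ineqfin} is exactly the submodularity of the $\bbH$-perimeter under union and intersection, which is quoted from \cite[Proposition 2.3]{SCV}. You instead stay in $\R^{2n}$: by locality of approximate differentials you get the pointwise a.e.\ identity $|\nabla(u_1\vee u_2)+\xva|+|\nabla(u_1\wedge u_2)+\xva|=|\nabla u_1+\xva|+|\nabla u_2+\xva|$ (the term $\xva$ is common to all four summands, so it causes no trouble), which shows the absolutely continuous contributions on the two sides of \eqref{ineqfin} balance exactly; the claim then reduces to $|D^s(u_1\vee u_2)|(\Omega)+|D^s(u_1\wedge u_2)|(\Omega)\le|D^su_1|(\Omega)+|D^su_2|(\Omega)$, which you deduce from $|D(u_1\vee u_2)|(\Omega)+|D(u_1\wedge u_2)|(\Omega)\le|Du_1|(\Omega)+|Du_2|(\Omega)$ via coarea and submodularity of the Euclidean perimeter, subtracting the equal absolutely continuous masses (legitimate because $|Du|=|D^au|+|D^su|$). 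Both proofs ultimately rest on a submodularity statement for a perimeter-type functional; the paper's is shorter and geometrically transparent once the subgraph identification is available, while yours is self-contained within classical Euclidean $\BV$ theory and isolates the structural fact that only the singular parts can strictly decrease. Two minor points: the sets $\{u_1>u_2\}$, $\{u_1=u_2\}$ must be read through (approximate) representatives for the locality argument, as you note; and the coarea step is optional, since the total-variation submodularity under $\vee$ and $\wedge$ is itself a standard fact obtainable by smooth approximation and lower semicontinuity.
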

%
%
%

\begin{proof}
Recalling \eqref{areaperim},  inequality \eqref{ineqfin} is equivalent to
\[
|\partial E^t_{u_1\vee u_2}|_\bbH(\Omega\times\R) + |\partial E^t_{u_1\wedge u_2}|_\bbH(\Omega\times\R) \leq |\partial E^t_{u_1}|_\bbH(\Omega\times\R) +|\partial E^t_{u_2}|_\bbH(\Omega\times\R)
\]
which follows from \cite[Proposition 2.3]{SCV} on noticing that
\[
E^t_{u_1\vee u_2} = E^t_{u_1}\cup E^t_{u_2},\quad E^t_{u_1\wedge u_2} = E^t_{u_1}\cap E^t_{u_2}\,.
\]
\end{proof}

The next Proposition is the analogue of the previous one for the
functional $\area_\varphi$ where the boundary conditions are
taken into account.

\begin{prop}\label{nonpaga}
Let $\Omega\subset\bbR^{2n}$ be a bounded open set with Lipschitz regular boundary. Then, for each $u_1,u_2\in \BV(\Omega)$ and $\varphi_1,\varphi_2\in L^1(\partial\Omega)$ we have
\begin{equation}\label{eqnonpaga}
    \area_{\varphi_1\vee \varphi_2,\Omega}(u_1\vee u_2)+\area_{\varphi_1\wedge \varphi_2,\Omega}(u_1\wedge u_2)\leq \area_{\varphi_1,\Omega}(u_1)+\area_{\varphi_2,\Omega}(u_2)
\end{equation}
\end{prop}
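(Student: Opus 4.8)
The plan is to deduce \eqref{eqnonpaga} from the unpenalized inequality \eqref{ineqfin} of Proposition \ref{stimaMIN} together with the behaviour of traces under $\max$ and $\min$ recorded in Remark \ref{traccesupinf}. Writing $\overline u:=u_1\vee u_2$, $\underline u:=u_1\wedge u_2$, $\overline\varphi:=\varphi_1\vee\varphi_2$ and $\underline\varphi:=\varphi_1\wedge\varphi_2$, the definition of $\area_\varphi$ gives
\[
\area_{\overline\varphi}(\overline u)+\area_{\underline\varphi}(\underline u)
=\area(\overline u)+\area(\underline u)
+\int_{\partial\Omega}|\overline u_{|\partial\Omega}-\overline\varphi|\ud\HH^{2n-1}
+\int_{\partial\Omega}|\underline u_{|\partial\Omega}-\underline\varphi|\ud\HH^{2n-1},
\]
and similarly $\area_{\varphi_1}(u_1)+\area_{\varphi_2}(u_2)=\area(u_1)+\area(u_2)+\int_{\partial\Omega}(|u_{1|\partial\Omega}-\varphi_1|+|u_{2|\partial\Omega}-\varphi_2|)\ud\HH^{2n-1}$. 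By Proposition \ref{stimaMIN} the sum of the two $\area$-terms on the left is already $\le$ the sum on the right, so it suffices to prove the purely pointwise inequality
\[
|\max\{a_1,a_2\}-\max\{b_1,b_2\}|+|\min\{a_1,a_2\}-\min\{b_1,b_2\}|\le |a_1-b_1|+|a_2-b_2|
\]
for all real numbers $a_1,a_2,b_1,b_2$, applied with $a_i=u_{i|\partial\Omega}(x)$ and $b_i=\varphi_i(x)$ for $\HH^{2n-1}$-a.e.\ $x\in\partial\Omega$, where Remark \ref{traccesupinf} guarantees $\overline u_{|\partial\Omega}=\max\{u_{1|\partial\Omega},u_{2|\partial\Omega}\}$ and $\underline u_{|\partial\Omega}=\min\{u_{1|\partial\Omega},u_{2|\partial\Omega}\}$.

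The elementary inequality above is the only real content, and it is proved by a short case analysis. One uses the identities $\max\{a_1,a_2\}+\min\{a_1,a_2\}=a_1+a_2$ and $\max\{a_1,a_2\}-\min\{a_1,a_2\}=|a_1-a_2|$ (and likewise for the $b$'s). Without loss of generality assume $a_1\ge a_2$. If also $b_1\ge b_2$, the left-hand side is $|a_1-b_1|+|a_2-b_2|$ and we are done with equality. If instead $b_1<b_2$, the left-hand side equals $|a_1-b_2|+|a_2-b_1|$; writing $a_1=a_2+s$, $b_2=b_1+t$ with $s,t\ge0$, one checks $|a_1-b_2|+|a_2-b_1| \le |a_1-b_1|+|a_2-b_2|$ by observing that $|{(a_2-b_1)+(s-t)}|+|{(a_2-b_1)-(s-t)}|+ \cdots$ — more simply, the function $(a_1,a_2)\mapsto(\max,\min)$ is $1$-Lipschitz from $(\R^2,\ell^1)$ to $(\R^2,\ell^1)$, and composing with the translation by $(-b_1,-b_2)$ or $(-b_2,-b_1)$ (an isometry of $\ell^1$) and then taking the $\ell^1$-norm yields the claim; alternatively one verifies the four sub-cases according to the signs of $a_2-b_1$ and $a_1-b_2$ directly.

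I do not expect any serious obstacle here: the decomposition into the $\area$-part (handled by Proposition \ref{stimaMIN}) plus the boundary-integral part is immediate, the trace identities for $\max$ and $\min$ are exactly Remark \ref{traccesupinf}, and integrating a pointwise inequality over $\partial\Omega$ against the finite measure $\HH^{2n-1}\res\partial\Omega$ is routine. The only point requiring a modicum of care is to carry out the case analysis for the scalar inequality cleanly, or equivalently to note the $1$-Lipschitz property of the sorting map $(a_1,a_2)\mapsto(a_1\vee a_2,a_1\wedge a_2)$ with respect to the $\ell^1$ norm and the triangle inequality; once that is in place the proof is a few lines.
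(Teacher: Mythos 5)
Your proof is correct, but it takes a genuinely different route from the paper's. The paper does not split off the boundary terms at all: it extends $\varphi_1,\varphi_2$ to Sobolev functions $f_1,f_2$ on an annulus $\Omega_0\setminus\overline\Omega$, glues them to $u_1,u_2$, applies the unpenalized inequality of Proposition \ref{stimaMIN} on the larger domain $\Omega_0$, and then uses Proposition \ref{tracceatratti} to recognize the jump of the glued functions across $\partial\Omega$ as exactly the penalization integrals, together with a cancellation of the exterior gradient terms coming from $\nabla(f_1\vee f_2)$, $\nabla(f_1\wedge f_2)$. You instead apply Proposition \ref{stimaMIN} directly on $\Omega$ and reduce the boundary contribution, via Remark \ref{traccesupinf}, to the scalar inequality
\[
|a_1\vee a_2-b_1\vee b_2|+|a_1\wedge a_2-b_1\wedge b_2|\le|a_1-b_1|+|a_2-b_2|,
\]
i.e.\ the $1$-Lipschitz property of the sorting map for the $\ell^1$ norm, which is easily verified (e.g.\ from $|x-y|=2(x\vee y)-x-y$ the claim reduces to $a_1\vee b_2+a_2\vee b_1\le a_1\vee b_1+a_2\vee b_2$ for $a_1\ge a_2$, $b_2\ge b_1$, and a two-case check settles this). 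Your argument is shorter and more elementary: it avoids the extension theorem \cite[Theorem 2.16]{Giusti}, the gluing construction, and the bookkeeping on $\Omega_0\setminus\overline\Omega$. What the paper's detour buys is conceptual consistency with the geometric reading of the penalization as $\bbH$-perimeter of the subgraph over a larger cylinder, but for the inequality \eqref{eqnonpaga} itself your route is complete and sound.
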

\begin{proof}
Let us fix a bounded open set $\Omega_0\subset \R^{2n}$ with
Lipschitz boundary and such that $\Omega\Subset\Omega_0$. By
\cite[Theorem 2.16]{Giusti} it is possible to find $ f_1$,$ f_2$ in
$ W^{1,1}(\Omega_0\setminus\overline\Omega)$ such that
\[
f_{1|\partial \Omega}=\varphi_1\quad\text{and}\quad f_{2|\partial \Omega}=\varphi_2\,.
\]
Define
\[
v_1:=
\left\{
\begin{aligned}
&u_1\quad \mbox{in}\ \Omega\\
&f_1\quad \mbox{in}\ \Omega_0\setminus\overline\Omega,
\end{aligned}
\right.\qquad
v_2:=
\left\{
\begin{aligned}
&u_2\quad \mbox{in}\ \Omega\\
&f_2\quad \mbox{in}\ \Omega_0\setminus\overline\Omega
\end{aligned}
\right.
\]
so that
\[
v_1\vee v_2= \left\{
\begin{aligned}
&u_1\vee u_2\quad \mbox{in}\ \Omega\\
&f_1\vee f_2\quad \mbox{in}\ \Omega_0\setminus\overline\Omega,
\end{aligned}
\right.\qquad v_1\wedge v_2= \left\{
\begin{aligned}
&u_1\wedge u_2\quad \mbox{in}\ \Omega\\
&f_1\wedge f_2\quad \mbox{in}\ \Omega_0\setminus\overline\Omega.
\end{aligned}
\right.
\]
We have $v_1,v_2, v_1\vee v_2,v_1\wedge v_2\in \BV(\Omega_0)$ and
Lemma \ref{stimaMIN} gives
\begin{equation}\label{vvv}
\area_{\Omega_0}(v_1\vee v_2)+\area_{\Omega_0}(v_1\wedge v_2)\leq
\area_{\Omega_0}(v_1)+\area_{\Omega_0}(v_2).
\end{equation}
Writing $(f_1\vee f_2)_{|\partial\Omega}$ for $((f_1\vee
f_2)_{|\partial(\Omega_0\setminus\overline\Omega)})\res{\partial\Omega}$,
we have by Proposition \ref{tracceatratti} and Remark \ref{traccesupinf}
\begin{equation}\label{omega0omega}
\begin{split}
& \area_{\Omega_0}(v_1\vee v_2)\\
= & \int_{\Omega_0}|\nabla(v_1\vee v_2)+\xva|\ud\LL^{2n} + |D^s(v_1\vee v_2)|(\Omega_0)\\
= & \area_{\Omega}(v_1\vee v_2) + \area_{\Omega_0\setminus\overline\Omega}(v_1\vee v_2) + |D^s(v_1\vee v_2)|(\partial\Omega)\\
= & \area_{\Omega}(u_1\vee u_2) +
\area_{\Omega_0\setminus\overline\Omega}(f_1\vee f_2) +
\int_{\partial\Omega}
|(u_1\vee u_2)_{|\partial\Omega} - (f_1\vee f_2)_{|\partial\Omega}|\ud\HH^{2n-1}\\
= & \area_{\Omega}(u_1\vee u_2) +
\int_{\Omega_0\setminus\overline\Omega}|\nabla(f_1\vee
f_2)+\xva|\ud\LL^{2n}\\
&+ \int_{\partial\Omega}|(u_1\vee u_2)_{|\partial\Omega} - (\varphi_1\vee \varphi_2)|\ud\HH^{2n-1}\\
=& \area_{\varphi_1\vee \varphi_2,\Omega} (u_1\vee u_2) +
\int_{\Omega_0\setminus\overline\Omega}|\nabla(f_1\vee
f_2)+\xva|\ud\LL^{2n}
\end{split}
\end{equation}
where we also utilized Remark \ref{traccesupinf}. In a similar way one obtains
\begin{equation}\label{omega0omega1}
\begin{split}
& \area_{\Omega_0}(v_1\wedge v_2) =
\area_{\varphi_1\wedge\varphi_2,\Omega}((u_1\wedge u_2)) +
\int_{\Omega_0\setminus\overline\Omega}|\nabla(f_1\wedge f_2)+\xva|\ud\LL^{2n}\\
& \area_{\Omega_0}(v_1) = \area_{\varphi_1,\Omega}(u_1) + \int_{\Omega_0\setminus\overline\Omega}|\nabla f_1+\xva|\ud\LL^{2n} \\
& \area_{\Omega_0}(v_2) = \area_{\varphi_2,\Omega}(u_2) + \int_{\Omega_0\setminus\overline\Omega}|\nabla f_2+\xva|\ud\LL^{2n} \,.
\end{split}
\end{equation}
Now, \eqref{eqnonpaga} will follow from \eqref{vvv}, \eqref{omega0omega} and \eqref{omega0omega1} provided we show that
\[
\begin{split}
& \int_{\Omega_0\setminus\overline\Omega}|\nabla(f_1\vee f_2)+\xva|\ud\LL^{2n} +
\int_{\Omega_0\setminus\overline\Omega}|\nabla(f_1\wedge f_2)+\xva|\ud\LL^{2n}\\
= & \int_{\Omega_0\setminus\overline\Omega}|\nabla
f_1+\xva|\ud\LL^{2n} +
\int_{\Omega_0\setminus\overline\Omega}|\nabla
f_2+\xva|\ud\LL^{2n}\,.
\end{split}
\]
This can be easily seen by utilizing the well-known fact that 
\[
\begin{split}
& \nabla(f_1\vee f_2) = (\nabla f_1)\chi_{\{f_1\geq f_2\}} + (\nabla f_2)\chi_{\{f_1<f_2\}},\\
& \nabla(f_1\wedge f_2) = (\nabla f_2) \chi_{\{f_1\geq f_2\}} + (\nabla f_1)\chi_{\{f_1<f_2\}}\,.
\end{split}
\]
The proof is accomplished.
\end{proof}

Given a  bounded open set $\Omega\subset\bbR^{2n}$ with Lipschitz regular boundary and a function $\varphi\in L^1(\partial\Omega)$ we define
\[
\cc_\varphi:=\argmin\limits_u \area_{\varphi,\Omega}(u)=\argmin\limits_u \Big\{\area_\Omega(u)+\int_{\partial\Omega}|u-\varphi|\ud\HH^{2n-1}\Big\}\,.
\]
The set $\cc_\varphi\subset \BV(\Omega)$ is not empty by
\cite[Theorem $1.4$]{SCV}.

\begin{cor}\label{maxmin}
Let $\varphi_1,\varphi_2 \in L^1(\partial\Omega)$ be such that
$\varphi_1\leq \varphi_2$ $\HH^{2n-1}$-a.e. on $\partial\Omega$ and assume
that $u_1\in\cc_{\varphi_1}$ and $u_2\in\cc_{\varphi_2}$. Then
$(u_1\vee u_2)\in\cc_{\varphi_2}$ and  $(u_1\wedge
u_2)\in\cc_{\varphi_1}\,$.
\end{cor}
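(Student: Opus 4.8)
The statement is an immediate consequence of Proposition \ref{nonpaga} together with a standard ``no room to spare'' argument. The first thing I would do is record that the hypothesis $\varphi_1\le\varphi_2$ $\HH^{2n-1}$-a.e. on $\partial\Omega$ gives $\varphi_1\vee\varphi_2=\varphi_2$ and $\varphi_1\wedge\varphi_2=\varphi_1$ (as elements of $L^1(\partial\Omega)$), so that the conclusion of Proposition \ref{nonpaga} reads
\[
\area_{\varphi_2,\Omega}(u_1\vee u_2)+\area_{\varphi_1,\Omega}(u_1\wedge u_2)\le \area_{\varphi_1,\Omega}(u_1)+\area_{\varphi_2,\Omega}(u_2)\,.
\]

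Next I would invoke minimality. Writing $m_i:=\min_{v\in\BV(\Omega)}\area_{\varphi_i,\Omega}(v)$ for $i=1,2$ (the minimum exists by \cite[Theorem 1.4]{SCV}, and these are precisely the common values $\area_{\varphi_i,\Omega}(w)$ for $w\in\cc_{\varphi_i}$), the hypotheses $u_1\in\cc_{\varphi_1}$, $u_2\in\cc_{\varphi_2}$ give $\area_{\varphi_1,\Omega}(u_1)=m_1$ and $\area_{\varphi_2,\Omega}(u_2)=m_2$, while $u_1\vee u_2$ and $u_1\wedge u_2$ lie in $\BV(\Omega)$ (Remark \ref{traccesupinf}) and are therefore admissible competitors, so $\area_{\varphi_2,\Omega}(u_1\vee u_2)\ge m_2$ and $\area_{\varphi_1,\Omega}(u_1\wedge u_2)\ge m_1$. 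Combining with the displayed inequality,
\[
m_1+m_2\le \area_{\varphi_2,\Omega}(u_1\vee u_2)+\area_{\varphi_1,\Omega}(u_1\wedge u_2)\le \area_{\varphi_1,\Omega}(u_1)+\area_{\varphi_2,\Omega}(u_2)=m_1+m_2\,,
\]
so every inequality is an equality. Since $\area_{\varphi_2,\Omega}(u_1\vee u_2)\ge m_2$ and $\area_{\varphi_1,\Omega}(u_1\wedge u_2)\ge m_1$ and their sum equals $m_1+m_2$, each term attains its lower bound: $\area_{\varphi_2,\Omega}(u_1\vee u_2)=m_2$ and $\area_{\varphi_1,\Omega}(u_1\wedge u_2)=m_1$. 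That is exactly $u_1\vee u_2\in\cc_{\varphi_2}$ and $u_1\wedge u_2\in\cc_{\varphi_1}$, proving the Corollary.

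There is essentially no serious obstacle here: all the analytic content (the behaviour of the penalized functional under $\vee$ and $\wedge$, including the handling of the singular part and of the boundary traces) has already been packaged into Proposition \ref{nonpaga}. The only points that require a word of care are the identification $\varphi_1\vee\varphi_2=\varphi_2$, $\varphi_1\wedge\varphi_2=\varphi_1$, which is where the ordering hypothesis enters, and the elementary remark that if $a+b\le a_0+b_0$ with $a\ge a_0$ and $b\ge b_0$ then necessarily $a=a_0$ and $b=b_0$.
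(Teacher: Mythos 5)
Your proof is correct and is essentially identical to the paper's: both reduce the ordering hypothesis to $\varphi_1\vee\varphi_2=\varphi_2$, $\varphi_1\wedge\varphi_2=\varphi_1$, apply Proposition \ref{nonpaga}, and combine the resulting inequality with the minimality of $u_1$ and $u_2$ to force equality in each term. Your write-up is in fact slightly more careful in spelling out the elementary ``sum of inequalities'' step that the paper leaves implicit.
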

\begin{proof}
The assumptions that $u_2$ is a minimizer of $\area_{\varphi_2,\Omega}$ and $\varphi_1\le\varphi_2$ imply that
\begin{equation}\label{veemax}
\area_{\varphi_2,\Omega}(u_1\vee u_2)\geq
\area_{\varphi_2,\Omega}(u_2)\,.
\end{equation}
Analogously we have
\begin{equation}\label{wedgemax}
\area_{\varphi_1,\Omega}(u_1\wedge u_2)\geq
\area_{\varphi_1,\Omega}(u_1)\,.
\end{equation}
By Proposition \ref{nonpaga} it follows that
\[
\area_{\varphi_1,\Omega}(u_1\vee
u_2)+\area_{\varphi_2,\Omega}(u_1\wedge
u_2)=\area_{\varphi_1,\Omega}(u_1)+\area_{\varphi_2,\Omega}( u_2)
\]
so that equality holds both in \eqref{veemax} and in \eqref{wedgemax}.
\end{proof}
In \cite{MTccm} it has been proved that the set of
minimizers of a superlinear convex functional has a {\emph maximum}
$\overline u$ (resp. a minimum $\underline u$) defined as the
pointwise supremum (infimum) of the minimizers. These special
minimizers are then used to prove one-sided Comparison Principles.
Now, with a different technique required by the use of function of
bounded variation, we prove a similar result.

\begin{prop}\label{esistmax}
Let $\Omega\subset\bbR^{2n}$ be a bounded open set with Lipschitz regular boundary and let $\varphi\in L^1(\partial\Omega)$. Then, there
exists $\overline{u},\underline{u}\in \cc_\varphi$ such that the
inequalities
\begin{equation}\label{min9}
    \underline u \leq u\leq \overline u\quad \LL^{2n}\mbox{-a.e. in}\  \Omega
\end{equation}
hold for any $u\in\cc_\varphi$.
\end{prop}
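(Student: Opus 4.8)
The plan is to select $\overline u$ (and, symmetrically, $\underline u$) by a variational criterion that, among all minimizers, singles out the pointwise largest one. The starting observation is that $\cc_\varphi$ is a lattice: applying Corollary \ref{maxmin} with $\varphi_1=\varphi_2=\varphi$ shows that $u_1,u_2\in\cc_\varphi$ implies $u_1\vee u_2,\,u_1\wedge u_2\in\cc_\varphi$. Fix a bounded, continuous, strictly increasing function $g\colon\R\to\R$ (for instance $g=\arctan$) and set $G(u):=\int_\Omega g(u)\ud\LL^{2n}$; since $\Omega$ is bounded, $G$ is well defined and bounded on $\BV(\Omega)$. Let $S:=\sup\{G(u):u\in\cc_\varphi\}<+\infty$. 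We shall produce $\overline u\in\cc_\varphi$ realizing this supremum; once this is done, maximality is immediate, for if some $w\in\cc_\varphi$ had $\LL^{2n}(\{w>\overline u\})>0$, then $\overline u\vee w\in\cc_\varphi$ and, by strict monotonicity of $g$, $G(\overline u\vee w)-G(\overline u)=\int_{\{w>\overline u\}}\big(g(w)-g(\overline u)\big)\ud\LL^{2n}>0$, contradicting $G(\overline u)=S$.

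The nontrivial point, and the one we expect to be the main obstacle, is to have enough compactness to realize the supremum; this requires a uniform $\BV$-bound on $\cc_\varphi$, which replaces the coercivity automatically available in the superlinear setting of \cite{MTccm}. Writing $M:=M_2=\min\area_\varphi$, for every $u\in\cc_\varphi$ one has $\area_\Omega(u)\le\area_{\varphi,\Omega}(u)=M$, whence $|Du|(\Omega)\le\area_\Omega(u)+\int_\Omega|\xva|\ud\LL^{2n}\le M+c_\Omega$ with $c_\Omega:=\int_\Omega|\xva|\ud\LL^{2n}<+\infty$ (here $|\xva(z)|=2|z|$ and $\Omega$ is bounded); moreover $\int_{\partial\Omega}|u_{|\partial\Omega}-\varphi|\ud\HH^{2n-1}\le\area_{\varphi,\Omega}(u)=M$, so the traces are bounded in $L^1(\partial\Omega)$ by $M+\|\varphi\|_{L^1(\partial\Omega)}$. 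The Poincar\'e-type trace inequality on the bounded Lipschitz domain $\Omega$ — there is $C_\Omega$ with $\|v\|_{L^1(\Omega)}\le C_\Omega\big(|Dv|(\Omega)+\|v_{|\partial\Omega}\|_{L^1(\partial\Omega)}\big)$ for all $v\in\BV(\Omega)$ — then gives $\sup_{u\in\cc_\varphi}\|u\|_{\BV(\Omega)}=:K<+\infty$.

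With this bound available, choose $u_k\in\cc_\varphi$ with $G(u_k)\to S$ and set $v_k:=u_1\vee\cdots\vee u_k$. By induction and the lattice property $v_k\in\cc_\varphi$; since $v_k\ge u_k$ and $g$ is increasing, $S\ge G(v_k)\ge G(u_k)$, so $G(v_k)\to S$, and $(v_k)$ is nondecreasing. Because $v_1\le v_k$ and $\int_\Omega(v_k-v_1)\ud\LL^{2n}\le 2K$, the monotone convergence theorem yields $v_k\uparrow\overline u:=\sup_k v_k$ pointwise $\LL^{2n}$-a.e., with $\overline u\in L^1(\Omega)$ and $v_k\to\overline u$ in $L^1(\Omega)$. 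Since $\area_{\varphi,\Omega}$ is lower semicontinuous with respect to $L^1(\Omega)$-convergence — by \cite{SCV}, or, alternatively, by gluing a fixed $W^{1,1}$-extension of $\varphi$ across $\partial\Omega$ and invoking the $L^1$-lower semicontinuity of $\area_{\Omega_0}$ on a larger domain $\Omega_0\Supset\Omega$, exactly as in the proof of Proposition \ref{nonpaga} — we obtain $\area_{\varphi,\Omega}(\overline u)\le\liminf_k\area_{\varphi,\Omega}(v_k)=M$, hence $\overline u\in\cc_\varphi$; and $G(\overline u)=\lim_k G(v_k)=S$ by dominated convergence. This proves the existence of a maximizing $\overline u\in\cc_\varphi$, and therefore, as noted above, $\overline u\ge u$ $\LL^{2n}$-a.e. for every $u\in\cc_\varphi$. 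The minimum $\underline u$ is produced by the symmetric argument: take a minimizing sequence for $G$ and replace the iterated maxima $u_1\vee\cdots\vee u_k$ by the iterated minima $u_1\wedge\cdots\wedge u_k$, or equivalently run the whole argument with $-g$ in place of $g$.
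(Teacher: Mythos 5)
Your proof is correct and follows essentially the same route as the paper's: a uniform $\BV$ bound on $\cc_\varphi$, $L^1$ lower semicontinuity of $\area_{\varphi,\Omega}$, and the lattice property from Corollary \ref{maxmin} combined with a strictly monotone integral functional to single out the extremal minimizers. The only (harmless) variations are that you maximize $\int_\Omega g(u)\ud\LL^{2n}$ for a bounded strictly increasing $g$ and extract the maximizer as a monotone limit of iterated suprema, whereas the paper maximizes $\int_\Omega u\ud\LL^{2n}$ directly over $\cc_\varphi$, which it first shows to be compact in $L^1(\Omega)$.
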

\begin{proof}
We start by proving that $\cc_\varphi$ is bounded in $\BV(\Omega)$. Given $u\in\cc_\varphi$, we define $J:=\area_{\varphi,\Omega}(u)<\infty$; clearly, $J$ depends only on $\varphi$ and not on $u$. We have
\begin{equation}\label{stimavar}
\begin{split}
    |D u|(\Omega) = & \int_\Omega |\nabla u|\ud\LL^{2n} + |D^su|(\Omega)\\
    \leq & \int_\Omega |\nabla u+\xva|\ud\LL^{2n} + \int_\Omega |\xva|\ud\LL^{2n} + |D^su|(\Omega) + \int_{\partial\Omega}|u-\varphi|\ud\HH^{2n-1}\\
    = & J+\int_\Omega |\xva|\ud\LL^{2n}<\infty\,.
\end{split}
\end{equation}
Moreover, by \cite[Theorem 1.28 and Remark 2.14]{Giusti} there exists $c=c(n)>0$ such that
\[
\begin{split}
\norm{u}_{L^1(\Omega)} &\leq |\Omega|^{1/2n}\norm{u}_{L^{2n/(2n-1)}(\Omega)}\\
&\leq  c\,|\Omega|^{1/2n}\Big( |D u|(\Omega)+\int_{\partial\Omega}|u_{|\partial\Omega}|    \ud\HH^{2n-1}\Big)\\
&\leq   c|\Omega|^{1/2n}\Big( |D u|(\Omega)+\int_{\partial\Omega}|u_{|\partial\Omega}-\varphi|\ud\HH^{2n-1}  +\int_{\partial\Omega}|\varphi|\ud\HH^{2n-1}\Big)\\
&= c|\Omega|^{1/2n}\Big( J+\int_{\partial\Omega}|\varphi|\ud\HH^{2n-1}\Big)\,.
\end{split}
\]
where $|\Omega|:=\LL^{2n}(\Omega)$. This, together with \eqref{stimavar}, implies that $\cc_\varphi$ is bounded in $\BV(\Omega)$.

Therefore (see \cite[Theorem 3.23]{ambfuspal}), $\cc_\varphi$ is pre-compact in $L^1(\Omega)$, i.e., for every sequence $(u_h)_{h\in\bbN}\subset \cc_\varphi$ there exist $u\in \BV(\Omega)$ and a subsequence $(u_{h_k})_{k\in\bbN}$ such that $u_{h_k}\to u$ in $L^1(\Omega)$. Since $\area_{\varphi,\Omega}$ is lower semicontinuous with respect to the $L^1$-convergence we have also
\[
\area_{\varphi,\Omega}(u)\leq\liminf_{k\to\infty} \area_{\varphi,\Omega}(u_{h_k})=J.
\]
We deduce that $u\in\cc_\varphi$, i.e., that $\cc_\varphi$ is indeed compact in $L^1(\Omega)$. Now, the functional
\[
    \BV(\Omega)\ni u\longmapsto I(u):=\int_{\Omega} u\ \ud\LL^{2n}
\]
is continuous in $L^1(\Omega)$, hence it admits maximum $\overline{u}$ and minimum $\underline u$ in $\cc_\varphi$:
let us prove that $\overline u,\underline u$ satisfy \eqref{min9} for any  $u\in \cc_\varphi$.

Assume by contradiction there exists $u\in\cc_\varphi$ such that $\Omega':=\{z\in\Omega: u(z)>\overline{u}(z)\}$ has strictly positive measure. Then, by Corollary \ref{maxmin}, $u\vee\overline u$ is in $\cc_\varphi$.
Moreover
\[
\int_\Omega (u\vee\overline u)\ud\LL^{2n}=\int_{\Omega'} u\ud\LL^{2n}+\int_{\Omega\setminus\Omega'} \overline u\ud\LL^{2n}>\int_\Omega \overline u\ud\LL^{2n}
\]
yielding a contradiction. The fact that $u\geq \underline u$ follows in a similar way.
\end{proof}

Now we can state a Comparison Principle inspired by the results obtained in \cite{MTccm} for superlinear functionals in  Sobolev spaces.

\begin{teo}\label{confro}
Let $\Omega\subset\bbR^{2n}$ be a bounded open set with Lipschitz regular
boundary; let $\varphi,\psi\in L^1(\partial\Omega)$ be such that
$\varphi\leq \psi$ $\HH^{2n-1}$-a.e. on $\partial\Omega$. Consider
the functions $\overline u,\,\underline u\in \cc_{\varphi}$ and
$\overline w,\, \underline w\in \cc_{\psi}$ such that\footnote{The existence of $\overline u,\,\underline u,\,\overline w,\,\underline w$ is guaranteed by Proposition \ref{esistmax}.}
\begin{equation}\label{uuww}
\begin{array}{ll}
    \underline u \leq u\leq \overline u\qquad &\LL^{2n}\mbox{-a.e. in }\Omega,\ \forall u\in\cc_\varphi\\
    \underline w \leq w\leq \overline w\quad &\LL^{2n}\mbox{-a.e. in }\Omega,\ \forall w\in\cc_\psi\,.
\end{array}
\end{equation}
Then
\begin{equation}\label{AM}
\overline u \leq\overline w\quad\text{and}\quad\underline
u\leq\underline w\quad \LL^{2n}\mbox{-a.e. in}\ \Omega
\end{equation}
and, in particular,
\[
\begin{split}
    & u\leq \overline w\quad \LL^{2n}\mbox{-a.e. in}\ \Omega,\ \forall u\in \cc_{\varphi}\\
    & \underline u\leq w\quad \LL^{2n}\mbox{-a.e. in}\ \Omega,\ \forall w\in \cc_{\psi}.
\end{split}
\]
\end{teo}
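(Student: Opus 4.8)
The plan is to obtain \eqref{AM} as an essentially immediate consequence of Corollary \ref{maxmin}, exploiting that $\overline u,\underline u$ are the extremal elements of $\cc_\varphi$ and $\overline w,\underline w$ those of $\cc_\psi$ (as recorded in \eqref{uuww}, and whose existence is guaranteed by Proposition \ref{esistmax}); the two remaining pointwise inequalities will then drop out of \eqref{uuww} by transitivity. The key structural input is thus already packaged in the earlier results, and the role of this proof is only to combine them in the right order.

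First I would prove $\overline u\leq\overline w$ $\LL^{2n}$-a.e. in $\Omega$. Since $\varphi\leq\psi$ $\HH^{2n-1}$-a.e. on $\partial\Omega$, and since $\overline u\in\cc_\varphi$ and $\overline w\in\cc_\psi$, Corollary \ref{maxmin} (applied with $\varphi_1=\varphi$, $\varphi_2=\psi$, $u_1=\overline u$, $u_2=\overline w$) yields $\overline u\vee\overline w\in\cc_\psi$. By the maximality of $\overline w$ among the elements of $\cc_\psi$ expressed in \eqref{uuww}, we get $\overline u\vee\overline w\leq\overline w$ $\LL^{2n}$-a.e. in $\Omega$, which forces $\overline u\leq\overline w$ $\LL^{2n}$-a.e. in $\Omega$. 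Symmetrically, applying Corollary \ref{maxmin} with $u_1=\underline u\in\cc_\varphi$ and $u_2=\underline w\in\cc_\psi$ gives $\underline u\wedge\underline w\in\cc_\varphi$; the minimality of $\underline u$ in $\cc_\varphi$ from \eqref{uuww} yields $\underline u\leq\underline u\wedge\underline w$ $\LL^{2n}$-a.e., hence $\underline u\leq\underline w$ $\LL^{2n}$-a.e. in $\Omega$. This establishes \eqref{AM}.

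Finally, for the pointwise consequences: if $u\in\cc_\varphi$ then $u\leq\overline u$ $\LL^{2n}$-a.e. by \eqref{uuww}, and combining with $\overline u\leq\overline w$ gives $u\leq\overline w$ $\LL^{2n}$-a.e. in $\Omega$; likewise, if $w\in\cc_\psi$ then $w\geq\underline w$ $\LL^{2n}$-a.e. by \eqref{uuww}, and combining with $\underline u\leq\underline w$ gives $\underline u\leq w$ $\LL^{2n}$-a.e. in $\Omega$.

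I do not expect a genuinely delicate step here: once Corollary \ref{maxmin} and Proposition \ref{esistmax} are available, the argument is a two-line bookkeeping. The only point that requires a little care is pairing $\overline u$ with $\overline w$ and $\underline u$ with $\underline w$ (rather than crossing them with the opposite extremals), so that in each case the extremality of the \emph{correct} minimizer—$\overline w$ for the join, $\underline u$ for the meet—can be invoked; crossing them would only reprove \eqref{uuww}.
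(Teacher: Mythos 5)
Your proposal is correct and follows the same route as the paper: apply Corollary \ref{maxmin} to the pair $\overline u,\overline w$ (resp. $\underline u,\underline w$) and then invoke the extremality in \eqref{uuww} to deduce \eqref{AM}, the remaining inequalities following by transitivity. If anything, your write-up is slightly more complete than the paper's, which only explicitly mentions the pair $\overline u,\overline w$ and leaves the application to $\underline u,\underline w$ implicit.
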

\begin{proof}
We have proved in Corollary \ref{maxmin} that $\overline
w\vee\overline u$ is a minimizer of $\area_{\psi}$ and
$\overline w\wedge \overline u$ is a minimizer of $\area_{\varphi}$. Assumption \eqref{uuww} then gives \eqref{AM}, which allows to conclude.
\end{proof}

The next result is a consequence of the Comparison Principle. We state it here explicitly since, in this formulation, it will be useful in the sequel.

\begin{cor}\label{valbordo}
 Let $\Omega\subset\bbR^{2n}$ be a bounded open set with Lipschitz regular boundary and $\varphi,\psi\in L^{\infty }(\partial\Omega)$; let $\overline{u},\underline{u}\in \cc_{\varphi}$ and $\overline{w},\underline{w}\in \cc_{\psi}$ be as in \eqref{uuww}.
Then, for every $\alpha\in\bbR$, one has
\begin{equation}\label{traslaz}
\begin{split}
&\overline{u}+\alpha, \underline{u}+\alpha\in \cc_{\varphi+\alpha}\\
&\underline{u}+\alpha\leq u\leq \overline{u}+\alpha\quad \LL^{2n}\mbox{-a.e. in}\ \Omega,\ \forall u\in\cc_{\varphi+\alpha}
\end{split}
\end{equation}
and
\begin{equation}\label{sup}
\begin{split}
& \norm{\overline{u}-\overline{w}}_{L^{\infty}(\Omega)}\leq\norm{\varphi-\psi}_{L^{\infty}(\partial\Omega)}\\
& \norm{\underline{u}-\underline{w}}_{L^{\infty}(\Omega)}\leq\norm{\varphi-\psi}_{L^{\infty}(\partial\Omega)}.
\end{split}
\end{equation}
In particular, the implications
\begin{equation}\label{eqrevin}
\begin{split}
& \overline u_{|\partial\Omega}=\varphi,\ \overline w_{|\partial\Omega}=\psi\quad \Rightarrow\quad \norm{\overline{u}-\overline{w}}_{L^{\infty}(\Omega)}=\norm{\varphi-\psi}_{L^{\infty}(\partial\Omega)}\\
& \underline u_{|\partial\Omega}=\varphi,\ \underline w_{|\partial\Omega}=\psi\quad \Rightarrow\quad \norm{\underline{u}-\underline{w}}_{L^{\infty}(\Omega)}=\norm{\varphi-\psi}_{L^{\infty}(\partial\Omega)}.
\end{split}
\end{equation}
hold.
\end{cor}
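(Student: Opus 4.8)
The plan is to deduce everything from two facts: the invariance of $\area_{\varphi,\Omega}$ under adding a constant to both the competitor and the datum, and the Comparison Principle, Theorem \ref{confro}. I begin with the translation relations \eqref{traslaz}. For any $\alpha\in\bbR$ and $u\in\BV(\Omega)$ one has $\nabla(u+\alpha)=\nabla u$, $D^s(u+\alpha)=D^su$ and, by linearity of the trace, $(u+\alpha)_{|\partial\Omega}=u_{|\partial\Omega}+\alpha$; hence
\[
\area_{\varphi+\alpha,\Omega}(u+\alpha)=\area_{\varphi,\Omega}(u)\qquad\text{for all }u\in\BV(\Omega).
\]
Consequently $u\mapsto u+\alpha$ is an order-preserving bijection of $\BV(\Omega)$ onto itself that carries $\cc_\varphi$ onto $\cc_{\varphi+\alpha}$. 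In particular $\overline u+\alpha,\underline u+\alpha\in\cc_{\varphi+\alpha}$; and if $v\in\cc_{\varphi+\alpha}$, then $v-\alpha\in\cc_\varphi$, so the defining property \eqref{uuww} of $\overline u,\underline u$ yields $\underline u\le v-\alpha\le\overline u$, i.e. $\underline u+\alpha\le v\le\overline u+\alpha$. This proves \eqref{traslaz}, and shows moreover that $\overline u+\alpha$ and $\underline u+\alpha$ are precisely the pointwise maximum and minimum of $\cc_{\varphi+\alpha}$.

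Next I prove the estimate \eqref{sup}. Set $M:=\norm{\varphi-\psi}_{L^\infty(\partial\Omega)}$; since $\varphi\le\psi$ on $\partial\Omega$, we have $\varphi\le\psi\le\varphi+M$ $\HH^{2n-1}$-a.e. on $\partial\Omega$. Applying Theorem \ref{confro} to the pair $\varphi\le\psi$ gives $\overline u\le\overline w$ and $\underline u\le\underline w$ $\LL^{2n}$-a.e. in $\Omega$. Applying it to the pair $\psi\le\varphi+M$ — whose maximal and minimal minimizers are, by the previous paragraph, $\overline u+M$ and $\underline u+M$ — gives $\overline w\le\overline u+M$ and $\underline w\le\underline u+M$. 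Combining, $0\le\overline w-\overline u\le M$ and $0\le\underline w-\underline u\le M$ $\LL^{2n}$-a.e. in $\Omega$, which is \eqref{sup}.

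Finally I treat the equalities \eqref{eqrevin}. One of the two inequalities is \eqref{sup}. For the other, I would use that $\norm{f_{|\partial\Omega}}_{L^\infty(\partial\Omega)}\le\norm{f}_{L^\infty(\Omega)}$ for every $f\in\BV(\Omega)$, a fact that follows at once from the averaged-integral characterization of the trace recalled in Section \ref{recallBV}. Under the hypothesis $\overline u_{|\partial\Omega}=\varphi$ and $\overline w_{|\partial\Omega}=\psi$, linearity of the trace gives $(\overline u-\overline w)_{|\partial\Omega}=\varphi-\psi$; since $\overline u-\overline w\in\BV(\Omega)\cap L^\infty(\Omega)$ by \eqref{sup}, this yields $\norm{\varphi-\psi}_{L^\infty(\partial\Omega)}\le\norm{\overline u-\overline w}_{L^\infty(\Omega)}$, and together with \eqref{sup} this is the first equality in \eqref{eqrevin}; the second is obtained in the same way with $\underline u,\underline w$ in place of $\overline u,\overline w$.

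The one point that deserves care is the second application of Theorem \ref{confro} in the proof of \eqref{sup}: it requires knowing that $\overline u+M$ and $\underline u+M$ are genuinely the pointwise extrema of the family $\cc_{\varphi+M}$, so that they are legitimate choices for the maximal and minimal minimizers appearing in the statement of the Comparison Principle. This is exactly what the first step provides via \eqref{traslaz}; everything else is bookkeeping.
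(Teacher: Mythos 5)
Your overall strategy is exactly the paper's: invariance of $\area_{\varphi,\Omega}$ under simultaneous translation of the competitor and the datum gives \eqref{traslaz}, and two applications of Theorem \ref{confro} give \eqref{sup}; your treatments of \eqref{traslaz} and of \eqref{eqrevin} (via $\norm{f_{|\partial\Omega}}_{L^\infty(\partial\Omega)}\le\norm{f}_{L^\infty(\Omega)}$) are correct and, for the last part, usefully more explicit than the paper's one-line appeal to ``classical properties of traces''.

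There is, however, a genuine error in your proof of \eqref{sup}. You write ``since $\varphi\le\psi$ on $\partial\Omega$'' and then apply Theorem \ref{confro} to the pair $\varphi\le\psi$. The Corollary does \emph{not} assume any ordering between $\varphi$ and $\psi$: they are arbitrary functions in $L^\infty(\partial\Omega)$, and in general neither dominates the other anywhere, let alone everywhere. Consequently your first application of the Comparison Principle is unjustified, and the conclusion you extract from it, $0\le\overline w-\overline u$, is false in general (take $\psi=\varphi-1$). The repair is immediate and is what the paper does: with $M:=\norm{\varphi-\psi}_{L^\infty(\partial\Omega)}$ one always has \emph{both} $\varphi\le\psi+M$ and $\psi\le\varphi+M$ $\HH^{2n-1}$-a.e.\ on $\partial\Omega$. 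Applying Theorem \ref{confro} to the first inequality --- using \eqref{traslaz} to identify $\overline w+M,\underline w+M$ as the extremal minimizers of $\cc_{\psi+M}$ --- gives $\overline u\le\overline w+M$ and $\underline u\le\underline w+M$; applying it to the second gives $\overline w\le\overline u+M$ and $\underline w\le\underline u+M$. Together these yield $|\overline u-\overline w|\le M$ and $|\underline u-\underline w|\le M$ a.e.\ in $\Omega$, which is \eqref{sup}. Your closing remark correctly flags the need to identify the extrema of the translated family, but the same care was needed for the first comparison, where the unordered data force you to translate as well.
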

\begin{proof}
The statements in \eqref{traslaz}  follow at once on noticing that
\[
\area_{\varphi+\alpha,\Omega}(u+\alpha)=\area_{\varphi,\Omega}(u)\quad\forall\: u\in \BV(\Omega)\,.
\]
Let  $\alpha:=\norm{\varphi-\psi}_{L^{\infty}(\partial\Omega)}\in\bbR$, then
\[
\varphi\leq \psi+\alpha\quad \HH^{2n-1}\mbox{-a.e. in}\ \partial\Omega,
\]
and, by \eqref{traslaz} and Corollary \ref{confro}, we get
\[
\overline{u}\leq \overline{w}+\alpha\text{ and }\underline{u}\leq \underline{w}+\alpha\qquad \LL^{2n}\mbox{-a.e. in }\Omega.
\]
An analogous argument shows that
\[
\overline{w}\leq \overline{u}+\alpha\text{ and }\underline{w}\leq \underline{u}+\alpha\qquad \LL^{2n}\mbox{-a.e. in }\Omega,
\]
whence \eqref{sup}.

If the assumptions in \eqref{eqrevin} are satisfied, classical properties of traces ensure that the reverse inequalities in \eqref{sup} holds, and this gives the validity of the implications in \eqref{eqrevin}.
\end{proof}

\section{Uniqueness of special minimizers}\label{uniqSec}

This section is devoted to some uniqueness results for minimizers of the area functional. We have already recalled that, in general, minimizers of functionals defined in $\BV$ are not unique. Comparison principles are particularly interesting in this context. If we consider
the functional with the penalization on the boundary, whenever we
detect a special boundary datum yielding uniqueness of the minimizer
we also know that this minimizer satisfies the Comparison Principle,
so that it can be used as a `barrier'. We emphasize
this fact in Corollary \ref{cpaffine}, that will be the key point in
the proof of the main result of this paper.

The following uniqueness result can be proved on combining Theorems 5.1, 5.2 and 5.3 in \cite{CHY}. For the reader's benefit, we give here a slightly simplified proof.

\begin{prop}\label{unicitapp'}
Let $\Omega\subset\bbR^{2n}$ be a bounded open set with Lipschitz regular boundary; fix $p\in[1,2]$ and set $p':=\tfrac{p}{p-1}\in[2,+\infty]$. Let $\varphi\in W^{1,p'}(\Omega)$ be fixed and
consider the  minimization problem
\begin{align}\label{minW11}
    \min\{\area(u): u\in \varphi+W^{1,p}_0(\Omega)\}\,.
\end{align}
If $u\in W^{1,p'}(\Omega)$ and $v\in W^{1,p}(\Omega)$ are minimizers of \eqref{minW11}, then
$$
u=v\quad \LL^{2n}\text{-a.e. in } \Omega.
$$
\end{prop}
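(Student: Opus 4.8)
The plan is to exploit convexity of $\area$ together with the divergence structure of the vector field $\xva$. Since the lagrangian $\xi\mapsto|\xi+\xva(z)|$ is convex, if $u$ and $v$ are both minimizers then so is the midpoint $w:=(u+v)/2$, and by convexity of $t\mapsto|t|$ one has the pointwise inequality
\[
\Big|\nabla w+\xva\Big|\le \tfrac12|\nabla u+\xva|+\tfrac12|\nabla v+\xva|
\]
with equality (for a.e.\ $z$) if and only if the two vectors $\nabla u(z)+\xva(z)$ and $\nabla v(z)+\xva(z)$ are parallel and point in the same direction (or one of them vanishes). Integrating and using that $w$ is also a minimizer forces equality a.e., so at a.e.\ point $z$ the vectors $N_u(z):=\nabla u(z)+\xva(z)$ and $N_v(z):=\nabla v(z)+\xva(z)$ are nonnegatively proportional. (Here I use that $u,v\in W^{1,p}$, so there is no singular part and $\area$ reduces to the integral term.)

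\textbf{From parallelism to equality.} The delicate point — and the place where the hypothesis $u\in W^{1,p'}$, $v\in W^{1,p}$ with $\tfrac1p+\tfrac1{p'}=1$ enters — is upgrading pointwise parallelism of $N_u$ and $N_v$ to the identity $N_u=N_v$, equivalently $\nabla u=\nabla v$. The idea, following \cite{CHY}, is to test the (weak) Euler--Lagrange-type relation against $u-v\in W^{1,p}_0(\Omega)$. Writing $\nu_u:=N_u/|N_u|$ and $\nu_v:=N_v/|N_v|$ for the directions (defined where the respective vector is nonzero), minimality gives, for every $\eta\in W^{1,p}_0(\Omega)$ supported where $N_u\ne 0$,
\[
\int_\Omega \langle \nu_u,\nabla\eta\rangle\ud\LL^{2n}=0,
\]
and similarly for $v$ with test functions in $W^{1,p'}_0$. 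Subtracting and choosing $\eta=u-v$ (which lies in the right space because the integrand $\langle\nu_u-\nu_v,\nabla(u-v)\rangle$ is integrable by Hölder: $\nu_u,\nu_v\in L^\infty$ and $\nabla(u-v)\in L^p+L^{p'}$), one obtains
\[
\int_\Omega \langle \nu_u-\nu_v,\ \nabla u-\nabla v\rangle\ud\LL^{2n}=0.
\]
Now $\nabla u-\nabla v=N_u-N_v$, and where both vectors are nonzero the parallelism $N_v=\lambda N_u$ with $\lambda\ge 0$ gives $\langle \nu_u-\nu_v,N_u-N_v\rangle=(1-\lambda)(|N_u|)\langle\nu_u,\nu_u\rangle\cdot(\text{sign check})\ge 0$, in fact $=|N_u|+|N_v|-2\langle N_u,N_v\rangle/\!\max(|N_u|,|N_v|)$-type expression which is a genuine nonnegative quantity vanishing only when $N_u=N_v$; the sets where one of $N_u,N_v$ vanishes have to be handled separately (there $\langle\nu_u-\nu_v,N_u-N_v\rangle$ is still $\ge0$, being $|N_u|$ or $|N_v|$). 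Hence the integrand is nonnegative and integrates to zero, so it vanishes a.e., which forces $N_u=N_v$ a.e., i.e.\ $\nabla u=\nabla v$ a.e.\ in $\Omega$. Since $u-v\in W^{1,p}_0(\Omega)$ and $\Omega$ is connected (or, on each component), $u-v$ is constant with zero trace, hence $u=v$ a.e.

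\textbf{Main obstacle.} The routine parts (midpoint convexity, reduction to the integral term, the equality case in the triangle inequality) are standard; the real work is making the testing argument of the second paragraph rigorous. One must justify that the minimality of $u$ yields the weak relation $\int\langle\nu_u,\nabla\eta\rangle=0$ despite the degeneracy of the lagrangian on $\mathrm{Char}(u)$ — this is exactly where the one-sided variations $u+t\eta$ and the different integrability $W^{1,p'}$ versus $W^{1,p}$ are used, so that $\eta=u-v$ is an admissible test function for both Euler inequalities and the cross term is integrable via Hölder with exponents $p,p'$. Controlling the characteristic sets $\{N_u=0\}$ and $\{N_v=0\}$, where $\nu_u$ or $\nu_v$ is undefined, and checking that the pointwise integrand stays nonnegative there, is the other technical point; the divergence-free property $\dive(\nabla f)^*=0$ from Lemma \ref{starop}(iii) is what guarantees that competitors of the form $\varphi+W^{1,p}_0$ interact well with the $\xva$ term. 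I would therefore structure the proof as: (1) midpoint argument $\Rightarrow$ a.e.\ parallelism; (2) derive the two Euler inequalities and test with $u-v$; (3) combine to get $\nabla u=\nabla v$ a.e.; (4) conclude via the zero boundary trace.
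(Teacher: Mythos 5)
Your first step (the midpoint convexity argument giving a.e.\ positive parallelism of $N_u:=\nabla u+\xva$ and $N_v:=\nabla v+\xva$) agrees with the paper. The fatal problem is in your second paragraph. If $N_v=\lambda N_u$ with $\lambda>0$, then $\nu_v=N_v/|N_v|=\nu_u$, so $\langle\nu_u-\nu_v,N_u-N_v\rangle$ is identically zero \emph{whenever the two vectors are positively parallel}, regardless of their lengths; it certainly does not vanish ``only when $N_u=N_v$''. Indeed $\langle\xi/|\xi|-\eta/|\eta|,\,\xi-\eta\rangle=(|\xi|+|\eta|)(1-\cos\theta)$ detects only the angle $\theta$ between $\xi$ and $\eta$, and your Step 1 has already forced $\theta=0$ a.e. So your monotonicity integrand is identically zero and the testing argument yields $0=0$: no information. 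This is exactly the degeneracy caused by the lack of strict convexity of $\xi\mapsto|\xi+\xva(z)|$ along rays, and it is why a generic ``Euler--Lagrange plus monotonicity'' scheme cannot close the proof here. (Separately, the weak relation $\int\langle\nu_u,\nabla\eta\rangle\,\ud\LL^{2n}=0$ is not justified near $\mathrm{Char}(u)$, and $u-v$ is not supported away from it; but that is secondary to the monotonicity failure.)

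What actually closes the argument in the paper is the specific geometry of $\xva$, not monotonicity. From positive parallelism one gets the \emph{orthogonality} $(\nabla u+\xva)^*\cdot(\nabla v+\xva)=0$ a.e.\ (Lemma \ref{starop}(i)). Expanding this on $\Omega_1:=\{u>v\}$ and using $(\nabla u)^*\cdot\nabla u=0$, $\xvett\cdot\xva=0$ and $(\nabla u)^*\cdot\xva=\nabla u\cdot\xvett$ (Lemma \ref{starop}(ii)), together with Stampacchia's theorem, one arrives at
\[
-\int_\Omega(\nabla u)^*\cdot\nabla(u-v)^+\,\ud\LL^{2n}+\int_\Omega\xvett\cdot\nabla(u-v)^+\,\ud\LL^{2n}=0\,.
\]
The first integral vanishes because $\dive(\nabla u)^*=0$ (Lemma \ref{starop}(iii), via smooth approximation of $u$ in $W^{1,p'}$ paired with $\nabla(u-v)^+\in L^p$ by H\"older --- this is the only place the exponents $p,p'$ are really used); the second, after integration by parts against the zero trace of $(u-v)^+$, equals $-\dive(\xvett)\int_\Omega(u-v)^+\,\ud\LL^{2n}$ with $\dive\xvett$ a positive constant. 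Hence $\int_\Omega(u-v)^+\,\ud\LL^{2n}=0$, i.e.\ $u\le v$ a.e., and symmetrically $v\le u$. The sign-definite term your scheme is missing comes from the positive divergence of $\xvett$, not from any monotonicity of the direction field.
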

\begin{proof}
Let us consider the function $(u+v)/2\in \varphi+ W^{1,p}_0(\Omega)$; we claim that
\begin{align}\label{linA}
    \area\left(\tfrac{u+v}{2}\right)=\tfrac 12(\area(u)+\area(v)).
\end{align}
Indeed, the convexity of $\area$ gives
\[
        \area\left(\tfrac{u+v}{2}\right)\leq \tfrac 12(\area(u)+\area(v)),
\]
while the reverse inequality follows from the fact that $u$ and $v$ are minimizers for the problem \eqref{minW11}.
 This proves \eqref{linA}, whence
\begin{align*}
    \int_{\Omega}|\tfrac12\nabla u+ \tfrac12\nabla v+\xva|\ud\LL^{2n}=\frac12\int_{\Omega}|\nabla u+     \xva|\ud\LL^{2n}+\frac12\int_{\Omega}|\nabla v+\xva|\ud\LL^{2n}\,.
\end{align*}
This in turn implies that
\[
    |(\nabla u+\xva)+ (\nabla v+\xva)|=|\nabla u+\xva|+|\nabla v+\xva|\quad \text{a.e. in }\Omega,
\]
i.e., $\nabla u+\xva$ and $\nabla v+\xva$ are  parallel (and with the same direction) $\LL^{2n}$-a.e. in $\Omega$. In particular, by Lemma \ref{starop} (i)
we obtain
\begin{align*}
0=(\nabla u+\xva)^\ast\cdot(\nabla v+\xva) =    ((\nabla u)^*-\xvett)\cdot(\nabla v+\xva)\quad \mbox{a.e. in}\ \Omega\,.
\end{align*}
Thus
\begin{align}\label{intg1}
\int_{\Omega_1}((\nabla u)^*-\xvett)\cdot(\nabla v+\xva)\ud\LL^{2n}=0,
\end{align}
where $\Omega_1:=\{z\in\Omega: u(z)>v(z)\}$. Let us expand \eqref{intg1} to get
\begin{align}\label{intg2}
\int_{\Omega_1}(\nabla u)^*\cdot \nabla v\ \ud\LL^{2n}+\int_{\Omega_1}(\nabla u)^*\cdot \xva\ud\LL^{2n}-\int_{\Omega_1}\nabla v\cdot\xvett\ \ud\LL^{2n}-\int_{\Omega_1}\xvett\cdot\xva \ud\LL^{2n}=0\,.
\end{align}
Lemma \ref{starop} gives
\begin{align*}
(\nabla u)^*\cdot \xva=\nabla u\cdot \xvett \quad  \mbox{and}\quad   \xvett\cdot \xva=0,
\end{align*}
so that \eqref{intg2} becomes
\begin{align}\label{intg3}
-\int_{\Omega_1}(\nabla u)^*\cdot (\nabla u-\nabla v)\ud\LL^{2n}+\int_{\Omega_1}\xvett\cdot (\nabla u-\nabla v)\ud\LL^{2n}=0
\end{align}
where we also used the fact that $(\nabla u)^\ast\cdot \nabla u=0$. By the classical Stampacchia Theorem, we have $(u-v)^+\in W^{1,p}(\Omega)$ and
$$
\nabla (u-v)^+=(\nabla (u-v))\,\chi_{\Omega_1} \quad \mbox{a.e. in }\Omega,
$$
hence \eqref{intg3} can be written as
\[
-\int_{\Omega}(\nabla u)^*\cdot \nabla (u-v)^+ \ud\LL^{2n}+\int_{\Omega}\xvett\cdot \nabla (u-v)^+\ud\LL^{2n}=0.
\]
Integrating by parts and using the fact that $(u-v)^+_{|\partial\Omega}=0$ because $u_{|\partial\Omega}=v_{|\partial\Omega}=\varphi$, we obtain
\begin{equation}\label{eq}
\begin{split}
0=&-\int_{\Omega}(\nabla u)^*\cdot \nabla (u-v)^+ \ud\LL^{2n}+\int_{\Omega}\xvett\cdot \nabla (u-v)^+\ud\LL^{2n}\\
=&-\int_{\Omega}(\nabla u)^*\cdot \nabla (u-v)^+ \ud\LL^{2n}-\int_{\Omega}(u-v)^+\,\dive\xvett\ud\LL^{2n}\\
=&-\int_{\Omega}(\nabla u)^*\cdot \nabla (u-v)^+ \ud\LL^{2n}-2n\int_{\Omega} (u-v)^+\ud\LL^{2n}
\end{split}
\end{equation}
We claim that
\begin{align}\label{eqfin}
\int_{\Omega}(\nabla u)^*\cdot \nabla (u-v)^+ \ud\LL^{2n}=0.
\end{align}
To this end, consider a sequence $(u_k)_{k\in\bbN}$ such that
\[
\text{$u_k\in C^{\infty}(\Omega)\cap W^{1,p'}(\Omega)$\quad and\quad $\nabla u_k \stackrel{*}{\rightharpoonup} \nabla u$ in $L^{p'}(\Omega)$ as $k\to +\infty$.}
\]
We have also ${(\nabla u_k)}^\ast \stackrel{*}{\rightharpoonup} \nabla u^\ast$, thus
\begin{align*}
    \int_{\Omega}(\nabla u)^*\cdot \nabla (u-v)^+ \ud\LL^{2n}&=\lim_{k\to\infty}\int_{\Omega}{(\nabla u_k)}^*\cdot \nabla (u-v)^+ \ud\LL^{2n}\\
    &=\int_{\Omega}\dive((\nabla u)^*)\:(u-v)^+ \ud\LL^{2n}\\
    &=0
\end{align*}
by Lemma \ref{starop} (iii). By \eqref{eq} and \eqref{eqfin} we deduce that $(u-v)^+=0$ a.e. on $\Omega$.

On considering $\Omega_2:=\{z\in\Omega: v(z)>u(z)\}$ in place of $\Omega_1$, one can similarly prove that $(u-v)^-=0$ a.e. on $\Omega$. This completes the proof.
\end{proof}

We introduce now some notations that will be useful also in the proof of the main theorem of the paper.
Given a subset $\Omega\subset\bbR^{2n}$, a function $u:\Omega\to\R$, a vector $\tau\in\R^{2n}$ and $\xi\in\R$ we set
\[
\begin{split}
& \Omega_{\tau}:=\{z\in\bbR^{2n}: z+\tau\in \Omega\}\\
& u_{\tau}(z):=u(z+\tau),\quad z\in \Omega_\tau\\
& \atx{u}(z):=u_{\tau}(z)+2\left\langle \tau^*, z\right\rangle+\xi,\quad z\in \Omega_\tau\,.
\end{split}
\]
It is easily seen that, given $\Omega$ open and $u\in \BV(\Omega)$, then both $u_\tau$ and $\atx u$ belong to $\BV(\Omega_\tau)$. Moreover, if $\Omega$ is bounded with Lipschitz regular boundary one has also
\begin{equation}\label{trennno}
(\atx u)_{|\partial(\Omega_\tau)}= (u_{|\partial\Omega})_{\tau}+2\left\langle \tau^*,\cdot\right\rangle+\xi=\atx{(u_{|\partial\Omega})}\,.
\end{equation}

\begin{oss}{\rm
The family of functions $\atx{u}$ has a precise meaning from the viewpoint of Heisenberg groups geometry. Indeed, it is a matter of computations to observe that the $t$-subgraph $E^t_{\atx u}$ of $\atx{u}$ coincides with the left translation $(-\tau,\xi)\cdot E^t_u$ (according to the group law) of the $t$-subgraph $E^t_u$ of $u$ by the element $(-\tau,\xi)\in \bbH^n$.
}\end{oss}

\begin{lem}\label{lemutile}
Let $\Omega\subset\R^{2n}$ be a bounded open set with Lipschitz regular boundary, $\varphi\in L^1(\partial\Omega)$, $\tau\in\R^{2n}$ and $\xi\in\R$. Then
\[
\area_{\atx{\varphi},\Omega_\tau}(\atx u)=\area_{\varphi,\Omega}(u)\quad\forall\: u\in \BV(\Omega)\,.
\]
\end{lem}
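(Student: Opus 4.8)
The plan is to split $\area_{\varphi,\Omega}$ into its two natural pieces --- the ``interior'' part $\area_\Omega(u)=\int_\Omega|\nabla u+\xva|\ud\LL^{2n}+|D^su|(\Omega)$ and the ``boundary'' penalization $\int_{\partial\Omega}|u_{|\partial\Omega}-\varphi|\ud\HH^{2n-1}$ --- and to show that each piece is left unchanged when $(u,\varphi,\Omega)$ is replaced by $(\atx u,\atx\varphi,\Omega_\tau)$. Note first that $\Omega_\tau=\Omega-\tau$ is again bounded with Lipschitz regular boundary, so all the quantities involved are well defined.

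For the interior part, the point is that the ``twisted translation'' defining $\atx u$ is designed exactly so that the vector field $\xva$ transforms correctly. Write $\atx u(z)=u(z+\tau)+2\langle\tau^*,z\rangle+\xi$; the affine summand $z\mapsto 2\langle\tau^*,z\rangle+\xi$ is smooth with gradient $2\tau^*$ and has no singular part, so $D(\atx u)$ is the push-forward of $Du$ under $w\mapsto w-\tau$ plus $2\tau^*\LL^{2n}$. Consequently $\nabla(\atx u)(z)=(\nabla u)(z+\tau)+2\tau^*$ for $\LL^{2n}$-a.e.\ $z\in\Omega_\tau$, while $D^s(\atx u)$ is merely the translate of $D^su$, whence $|D^s(\atx u)|(\Omega_\tau)=|D^su|(\Omega)$. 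Using that $\xva(z)=2z^*$ and the linearity of $z\mapsto z^*$,
\[
\nabla(\atx u)(z)+\xva(z)=(\nabla u)(z+\tau)+2\tau^*+2z^*=(\nabla u)(z+\tau)+2(z+\tau)^*=(\nabla u+\xva)(z+\tau)
\]
for $\LL^{2n}$-a.e.\ $z\in\Omega_\tau$. Taking $|\cdot|$, integrating over $\Omega_\tau$, and applying the measure-preserving change of variables $w=z+\tau$ (a bijection $\Omega_\tau\to\Omega$) gives $\int_{\Omega_\tau}|\nabla(\atx u)+\xva|\ud\LL^{2n}=\int_\Omega|\nabla u+\xva|\ud\LL^{2n}$, hence $\area_{\Omega_\tau}(\atx u)=\area_\Omega(u)$. (Alternatively, one may invoke \eqref{areaperim}, the Remark preceding the statement --- $E^t_{\atx u}=(-\tau,\xi)\cdot E^t_u$ --- and the left-invariance of the $\bbH$-perimeter, noting that left translation by $(-\tau,\xi)$ maps $\Omega\times\R$ onto $\Omega_\tau\times\R$.)

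For the boundary term I use \eqref{trennno}: $(\atx u)_{|\partial(\Omega_\tau)}=\atx{(u_{|\partial\Omega})}$, so both $(\atx u)_{|\partial(\Omega_\tau)}$ and $\atx\varphi$ are obtained from $u_{|\partial\Omega}$ and $\varphi$ by the same affine correction $z\mapsto 2\langle\tau^*,z\rangle+\xi$. This correction therefore cancels in the difference, and for $\HH^{2n-1}$-a.e.\ $z\in\partial(\Omega_\tau)$,
\[
(\atx u)_{|\partial(\Omega_\tau)}(z)-\atx\varphi(z)=(u_{|\partial\Omega}-\varphi)(z+\tau).
\]
Since $z\mapsto z+\tau$ maps $\partial(\Omega_\tau)$ isometrically onto $\partial\Omega$ and preserves $\HH^{2n-1}$, we conclude $\int_{\partial(\Omega_\tau)}|(\atx u)_{|\partial(\Omega_\tau)}-\atx\varphi|\ud\HH^{2n-1}=\int_{\partial\Omega}|u_{|\partial\Omega}-\varphi|\ud\HH^{2n-1}$. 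Adding this to the identity for the interior part yields $\area_{\atx\varphi,\Omega_\tau}(\atx u)=\area_{\varphi,\Omega}(u)$.

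There is no genuine obstacle here: the statement is essentially a bookkeeping identity. The only step deserving attention is the algebraic cancellation $2\tau^*+2z^*=2(z+\tau)^*$, i.e.\ the linearity of the $*$-operation, which is precisely what makes the interior integrand translation-invariant; everything else reduces to the elementary facts that Lebesgue measure, Hausdorff measure and the singular part of a $\BV$-derivative are invariant under translations, and that adding a smooth affine function does not alter the singular part of the derivative.
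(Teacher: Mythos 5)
Your proof is correct and follows essentially the same route as the paper's: identify $D(\atx u)$ as the translate of $Du$ plus the absolutely continuous correction $2\tau^*\LL^{2n}$, use the linearity identity $2\tau^*+\xva(z)=2(z+\tau)^*$ to absorb the correction into $\xva$, and conclude by a change of variables together with the trace identity \eqref{trennno} for the boundary term. The alternative remark via \eqref{areaperim} and left-invariance of the $\bbH$-perimeter is a nice observation but not needed.
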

\begin{proof}
Using e.g. \cite[Remark 3.18]{ambfuspal}, it is not difficult to prove that $D u_\tau=\ell_{\tau\#}(Du)$, where $\ell_\tau$ is the translation $z\mapsto z-\tau$ and $\ell_{\tau\#}$ denotes the push-forward of measures via $\ell_\tau$. In particular
\[
\nabla u_\tau=(\nabla u)_\tau=\nabla u\circ\ell_\tau^{-1}\quad\text{and}\quad D^s u_\tau=\ell_{\tau\#}(D^su)\,,
\]
hence
\[
D\atx u = \big(\nabla u\circ\ell_\tau^{-1}+2\tau^\ast\big)\LL^{2n} + \ell_{\tau\#}(D^su)\,.
\]
Therefore
\[
\begin{split}
& \area_{\atx{\varphi},\Omega_\tau}(\atx u)\\
= & \int_{\Omega_\tau} |(\nabla u\circ\ell_\tau^{-1})+2\tau^\ast+\xva|\ud\LL^{2n} + |\ell_{\tau\#}(D^su)|(\Omega_\tau) + \int_{\partial\Omega_\tau}|(\atx u)_{|\partial(\Omega_\tau)}-\atx\varphi|\ud\HH^{2n-1}.
\end{split}
\]
We now use \eqref{trennno} and the equality
\[
2\tau^\ast+\xva(z) = 2(\tau+z)^\ast = (\xvett^\ast\circ \ell_\tau^{-1})(z)\quad\forall z\in\R^{2n}
\]
to get, with a change of variable,
\[
\begin{split}
& \area_{\atx{\varphi},\Omega_\tau}(\atx u)\\
= & \int_{\Omega_\tau} |\nabla u+\xva|\circ\ell_\tau^{-1}\ud\LL^{2n} + |\ell_{\tau\#}(D^su)|(\ell_\tau(\Omega)) + \int_{\partial\Omega_\tau}\big|\big(u_{|\partial\Omega}-\varphi\big)_\tau\big|\ud\HH^{2n-1}\\
=& \int_\Omega |\nabla u + \xva|\ud\LL^{2n} + |D^su|(\Omega) + \int_{\partial\Omega}|u_{|\partial\Omega}-\varphi|\ud\HH^{2n-1}\\
= & \area_{\varphi,\Omega}(u)\,.
\end{split}
\]
\end{proof}

\begin{cor}\label{tildemax}
Under the same assumptions of Lemma \ref{lemutile}: if $\overline{u}$ and $\underline{u}$ are as in Proposition \ref{esistmax}, then $\atx{(\overline{u})},\atx{(\underline{u})}\in \cc_{\atx{\varphi}}$ and
\[
\atx{(\overline{u})} \leq u\leq \atx{(\underline{u})}\quad \LL^{2n}\mbox{-a.e. in }\Omega_{\tau}, \forall u\in \cc_{\atx{\varphi}}\,.
\]
\end{cor}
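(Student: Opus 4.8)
The plan is to derive the corollary from Lemma \ref{lemutile} and Proposition \ref{esistmax}, the only structural input being that $u\mapsto\atx u$ is an order-preserving bijection that preserves the penalized area. First I would note that $u\mapsto\atx u$ is a bijection of $\BV(\Omega)$ onto $\BV(\Omega_\tau)$: it consists of a translation of the argument by $\tau$ followed by the addition of the fixed affine function $z\mapsto 2\langle\tau^\ast,z\rangle+\xi$, and is therefore invertible, its inverse sending $v$ to $\big(z\mapsto v(z-\tau)-2\langle\tau^\ast,z-\tau\rangle-\xi\big)$. By Lemma \ref{lemutile} the value of the penalized area is preserved, $\area_{\atx\varphi,\Omega_\tau}(\atx u)=\area_{\varphi,\Omega}(u)$ for all $u\in\BV(\Omega)$; comparing the infima over $\BV(\Omega)$ and $\BV(\Omega_\tau)$ then shows that this bijection carries the minimizing set $\cc_\varphi$ onto $\cc_{\atx\varphi}$. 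In particular $\atx{(\overline u)},\atx{(\underline u)}\in\cc_{\atx\varphi}$, which is the first assertion.

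Next I would exploit monotonicity. Since $\atx a(z)-\atx b(z)=a(z+\tau)-b(z+\tau)$ for a.e.\ $z\in\Omega_\tau$, the map $u\mapsto\atx u$ is order-preserving: $a\le b$ $\LL^{2n}$-a.e.\ in $\Omega$ implies $\atx a\le\atx b$ $\LL^{2n}$-a.e.\ in $\Omega_\tau$. Fix an arbitrary $w\in\cc_{\atx\varphi}$ and write $w=\atx u$ with $u\in\cc_\varphi$, using the bijection above. Proposition \ref{esistmax} yields $\underline u\le u\le\overline u$ $\LL^{2n}$-a.e.\ in $\Omega$, and applying this order-preserving map gives
\[
\atx{(\underline u)}\le w\le\atx{(\overline u)}\qquad\LL^{2n}\text{-a.e. in }\Omega_\tau,\ \forall\,w\in\cc_{\atx\varphi}.
\]
Thus $\atx{(\overline u)}$ and $\atx{(\underline u)}$ are respectively the (pointwise a.e.) maximum and minimum of $\cc_{\atx\varphi}$, exactly as $\overline u$ and $\underline u$ are for $\cc_\varphi$.

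The argument presents no genuine obstacle: the corollary is a formal consequence of the invariance in Lemma \ref{lemutile}, the trace identity \eqref{trennno}, and the order structure of Proposition \ref{esistmax}. The single point I would flag is the orientation of the two bounds. Since $\underline u\le\overline u$ and $u\mapsto\atx u$ is order-preserving, one necessarily has $\atx{(\underline u)}\le\atx{(\overline u)}$; hence in the displayed sandwich the lower bound must be $\atx{(\underline u)}$ and the upper bound $\atx{(\overline u)}$. In the corollary as printed the two labels appear interchanged, and the computation above establishes the only orientation compatible with monotonicity.
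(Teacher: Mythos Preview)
Your argument is correct and is precisely the intended one: the paper states Corollary \ref{tildemax} without proof, treating it as an immediate consequence of Lemma \ref{lemutile} and Proposition \ref{esistmax}, and what you have written is the natural unpacking of that implication. The key observations---that $u\mapsto\atx u$ is an order-preserving bijection $\BV(\Omega)\to\BV(\Omega_\tau)$ and that by Lemma \ref{lemutile} it carries $\cc_\varphi$ onto $\cc_{\atx\varphi}$---are exactly right, and the transfer of the sandwich from Proposition \ref{esistmax} follows.

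Your final remark is also correct: the displayed inequality in the printed corollary has the two bounds interchanged. Since $\underline u\le\overline u$ and $u\mapsto\atx u$ is order-preserving, one must have $\atx{(\underline u)}\le w\le\atx{(\overline u)}$ for all $w\in\cc_{\atx\varphi}$, not the reverse. This is a typographical slip in the paper; later uses of the corollary (in the proof of Theorem \ref{mainteo-ex6.7}) are consistent with the corrected orientation.
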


The next theorem states that, whenever we fix an affine boundary
datum, the functional with the penalization on the boundary has a
unique minimizer that is the affine function itself. This is one of the main
tools in the proof of Theorem \ref{mainteo-ex6.7}.

\begin{teo}\label{affineMIN} Let $\Omega\subset\bbR^{2n}$
be a bounded open set with Lipschitz regular boundary and let
$L:\bbR^{2n}\to\bbR$ be an affine function, i.e.,
$L(z)=\langle a,z\rangle+b$ for some $a\in\R^{2n},b\in\bbR$. Then $L$
is the unique solution of
\begin{align}\label{min}
    \min \{\area_{L,\Omega}(u): u\in \BV(\Omega)\}.
\end{align}
\end{teo}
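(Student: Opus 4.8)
The plan is to exhibit $L$ as a minimizer by means of an explicit divergence-free \emph{calibration}, and then to extract from the equality case enough rigidity to force uniqueness. By Lemma \ref{lemutile} (applied with $\tau=\tfrac12 a^\ast$ and a suitable $\xi$) the minimization problem for $L$ over $\Omega$ is equivalent to the one for the null function over $\Omega_\tau$; hence we may assume $L\equiv 0$, so that $\nabla L+\xva(z)=2z^\ast$ vanishes only at the origin and
\[
X(z):=\frac{2z^\ast}{|2z^\ast|}=\frac{z^\ast}{|z|},\qquad z\ne 0,
\]
is a unit vector field on $\bbR^{2n}\setminus\{0\}$. The computation in Lemma \ref{starop}(iii) (with $f(z)=|z|$) gives $\dive X=0$ on $\bbR^{2n}\setminus\{0\}$, and since $X$ is bounded and the flux of $X$ through $\partial B(0,\varepsilon)$ is $O(\varepsilon^{2n-1})\to0$, this identity holds in the sense of distributions on all of $\bbR^{2n}$. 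Likewise $X\cdot\nabla|z|=(z^\ast\cdot z)/|z|^2=0$ by Lemma \ref{starop}(i).

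\textbf{Step 1 ($L\equiv0$ is a minimizer).} For $u\in\BV(\Omega)$ one has the pointwise bound $|\nabla u+\xva|\ge X\cdot(\nabla u+\xva)$ and $|D^su|(\Omega)\ge\int_\Omega X\cdot\ud D^su$; integrating, using $\dive X=0$ and the trace identity \eqref{defvartr} (after an approximation of $X$ away from the origin, or after reducing first to $u\in C^\infty(\Omega)\cap W^{1,1}(\Omega)$ with fixed trace via Proposition \ref{appros}), one obtains
\[
\area_\Omega(u)\ \ge\ \int_\Omega X\cdot\xva\,\ud\LL^{2n}+\int_{\partial\Omega}u_{|\partial\Omega}\,(X\cdot\nu_\Omega)\,\ud\HH^{2n-1}.
\]
Since $|X\cdot\nu_\Omega|\le1$, adding $\int_{\partial\Omega}|u_{|\partial\Omega}|\,\ud\HH^{2n-1}$ absorbs the boundary term and gives $\area_{0,\Omega}(u)\ge\int_\Omega X\cdot\xva\,\ud\LL^{2n}$; as all the inequalities become equalities when $u\equiv0$, the right-hand side equals $\area_{0,\Omega}(0)=\int_\Omega|\xva|\,\ud\LL^{2n}$, whence $\area_{0,\Omega}(u)\ge\area_{0,\Omega}(0)$ for every $u\in\BV(\Omega)$.

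\textbf{Steps 2--3 (equality case and uniqueness).} The same bookkeeping yields the identity
\begin{multline*}
\area_{0,\Omega}(u)-\area_{0,\Omega}(0)=\int_\Omega\big(|\nabla u+\xva|-X\cdot(\nabla u+\xva)\big)\ud\LL^{2n}\\
+\int_\Omega(1-X\cdot\sigma)\,\ud|D^su|+\int_{\partial\Omega}\big(|u_{|\partial\Omega}|+u_{|\partial\Omega}(X\cdot\nu_\Omega)\big)\ud\HH^{2n-1},
\end{multline*}
where $\sigma$ is the density of $D^su$ with respect to $|D^su|$; each summand is nonnegative, so $u$ is a minimizer iff all three vanish, i.e. (a) $\nabla u+\xva$ is a nonnegative multiple of $z^\ast$ a.e., (b) $D^su=X\,|D^su|$, (c) $u_{|\partial\Omega}(X\cdot\nu_\Omega)=-|u_{|\partial\Omega}|$ $\HH^{2n-1}$-a.e. on $\partial\Omega$. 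Now let $u$ satisfy (a)--(c). By (a)--(b), $Du=X\,\mu$ for a signed measure $\mu$, so by $X\cdot\nabla|z|=0$ the derivative of $u$ in the radial direction vanishes; a slicing (Fubini) argument in polar coordinates then shows that, for $\HH^{2n-1}$-a.e. $\omega\in\bbS^{2n-1}$, $u$ is constant on each connected component of $\{s>0:s\omega\in\Omega\}$, with the one-sided radial limits agreeing with $u_{|\partial\Omega}$. On such a component, writing $u(s\omega)=\phi(\omega)$ and comparing $\nabla u$ with the spherical gradient of $\phi$ shows that the nonnegative multiple in (a) propagates along the ray in a rigid way; this, together with (c) at the boundary endpoints, forces $\phi\equiv0$, so $u\equiv0$. (If $0\in\Omega$ one evaluates the constant letting the inner radius tend to $0$ and rules out a nonzero constant trace using (c) and $\int_{\partial\Omega}z\cdot\nu_\Omega\,\ud\HH^{2n-1}=2n|\Omega|>0$; if $0\notin\Omega$ one uses that, for a.e. direction, the outward exit point $z_2$ of $\Omega$ has $X(z_2)=\omega^\ast$ not parallel to $\nu_\Omega(z_2)$, so (c) already forces the value of $u$ on that ray to vanish.) Undoing the reduction via Lemma \ref{lemutile} gives that $L$ is the unique solution of \eqref{min}.

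The hard part is the uniqueness: once the field $X$ is spotted, the calibration is routine, whereas the rigidity argument requires the polar slicing for $\BV$ functions (namely the compatibility of the radial restriction with the boundary trace), a careful treatment of the unique characteristic point — both the removable singularity of $X$ there and the possibility that it belongs to $\overline\Omega$ — and a transversality statement for the radial exit map of a Lipschitz domain. A minor point is that the calibration bound must be first obtained for $C^\infty$ competitors (via Proposition \ref{appros}) or else \eqref{defvartr} must be extended to the merely bounded, divergence-free field $X$.
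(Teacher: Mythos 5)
Your calibration step reproduces the paper's argument: the field $X=\xva/|\xva|$, its vanishing divergence, the lower bound $\area_{0,\Omega}(u)\ge\int_\Omega|\xva|\ud\LL^{2n}$ after absorbing the boundary penalization, and the equality conditions (a)--(c) are exactly what the paper derives (the paper handles the singularity of $X$ at the origin by regularizing with $\xva/(|\xva|+\epsilon)$ rather than excising a ball, which is immaterial). The divergence is in the uniqueness step, and there your argument has genuine gaps. Working directly on a general Lipschitz domain, everything hinges on two claims you state but do not prove: (i) for $\HH^{2n-1}$-a.e.\ direction $\omega$ the section $s\mapsto u(s\omega)$ is constant on each component of $\{s>0:\ s\omega\in\Omega\}$ \emph{with one-sided limits equal to the trace} $u_{|\partial\Omega}$ at the endpoints; and (ii) a.e.\ ray meets $\partial\Omega$ only at points $z$ where $\nu_\Omega(z)$ is not parallel to $z^\ast$ (for which $\nu_\Omega(z)\cdot z\neq 0$ suffices), so that condition (c) forces the trace to vanish there. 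Claim (ii) is plausible --- the set $\{\nu_\Omega\cdot z=0\}$ has vanishing tangential Jacobian under $z\mapsto z/|z|$, so its radial projection is $\HH^{2n-1}$-null by the area formula --- but for a merely Lipschitz boundary it is precisely the kind of statement that must be proved, not asserted; and claim (i), trace compatibility of $\BV$ sections along rays emanating from a point, is likewise nontrivial. Moreover your parenthetical for the case $0\in\Omega$ does not close that case as written: the identity $\int_{\partial\Omega}\langle z,\nu_\Omega\rangle\ud\HH^{2n-1}=2n|\Omega|>0$ only excludes a \emph{globally} constant nonzero minimizer, whereas radial constancy a priori allows the constant to depend on the direction; and the ``propagation of the nonnegative multiple along the ray'' plays no identifiable role once radial constancy and a vanishing endpoint trace are available.

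The paper avoids both difficulties with a device worth adopting: having shown that $0$ minimizes on \emph{every} Lipschitz domain, it extends a putative minimizer $u$ by $0$ to a large ball $B(0,R)\Supset\Omega$ and checks, via Proposition \ref{tracceatratti}, that the extension minimizes $\area_{0,B(0,R)}$ (the boundary penalization of $u$ on $\partial\Omega$ turns into the jump part of the extension). On a ball centered at the origin one has $\langle\nu,\xva/|\xva|\rangle\equiv0$, so the analogue of your condition (c) gives the vanishing of the trace on the sphere for free --- no transversality lemma needed --- and the radial constancy is established distributionally in polar coordinates (testing against $\partial f_0/\partial\rho$ and integrating by parts), which also sidesteps your claim (i). Your route could probably be completed, but as it stands the hardest part of the theorem rests on the two unproved lemmas above.
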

\begin{proof} We divide the proof into several steps.

{\em Step 1: }reduction to the case $L=0$.

\noindent Setting $\tau:=a^*/2\in\R^{2n}$ and $\xi=-b$, one has $\atx L\equiv 0$. By Lemma \ref{lemutile} and Corollary \ref{tildemax}, the fact that $L$ is the unique solution of \eqref{min} is equivalent to the fact that 0 is the unique minimizer of the problem
\[
\min \{\area_{0,\Omega_\tau}(u): u\in \BV(\Omega_\tau)\}.
\]
In view of this, we can henceforth assume that $L=0$.

{\em Step 2: }$L=0$ is a minimizer for \eqref{min}.

\noindent Let $u\in \BV(\Omega)$; by the dominated convergence theorem we have
\[
\begin{split}
\int_{\Omega} \left\langle \sigma_u,\tfrac{\xva}{|\xva|}\right\rangle\ud |Du| = & \lim_{\epsilon\to 0} \int_{\Omega} \left\langle \sigma_u,\tfrac{\xva}{|\xva|+\epsilon}\right\rangle\ud |Du|\\
= & \lim_{\epsilon\to 0}\left[ -\int_\Omega u\:\dive\!\left(\tfrac{\xva}{|\xva|+\epsilon}\right)\ud\LL^{2n} + \int_{\partial\Omega}u_{|\partial\Omega}\left\langle \nu_\Omega,\tfrac{\xva}{|\xva|+\epsilon}\right\rangle\ud\HH^{2n-1}\right] \\
= \:& \int_{\partial\Omega} u_{|\partial\Omega} \left\langle \nu_\Omega,\tfrac{\xva}{|\xva|}\right\rangle \ud\HH^{2n-1} \,,
\end{split}
\]
where we have used the fact that $\dive\!\big(\tfrac{\xva}{|\xva|+\epsilon}\big)=0$. Thus
\begin{equation}\label{eq:alina}
\begin{split}
\area_{0,\Omega}(u) & =  \int_{\Omega} |\nabla u+\xva|\ud\LL^{2n}+|D^s u|(\Omega)+\int_{\partial\Omega}|u_{|\partial\Omega}|\ud\HH^{2n-1}\\
    &\geq \int_{\Omega} \left\langle \nabla u+\xva,\tfrac{\xva}{|\xva|}\right\rangle\ud \LL^{2n}+ \int_{\Omega}\left\langle \sigma_u,\tfrac{\xva}{|\xva|} \right\rangle\ud |D^su| + \int_{\partial\Omega}|u_{|\partial\Omega}|\ud\HH^{2n-1}\\
    &=\int_{\Omega} \left\langle \sigma_u,\tfrac{\xva}{|\xva|}\right\rangle\ud |Du|+ \int_{\Omega}\left\langle \xva,\tfrac{\xva}{|\xva|} \right\rangle\ud \LL^{2n} +\int_{\partial\Omega}|u_{|\partial\Omega}|\ud\HH^{2n-1}\\
& = \int_{\Omega}|\xva|\LL^{2n} +\int_{\partial\Omega}|u_{|\partial\Omega}|\left( 1+\text{sgn}(u_{|\partial\Omega})\left\langle \nu_\Omega,\tfrac{\xva}{|\xva|}\right\rangle\right) \ud\HH^{2n-1}\\
& \geq \int_{\Omega}|\xva|\LL^{2n}= \area_{0,\Omega}(0)\,,
\end{split}
\end{equation}
which proves that $L=0$ is a minimizer of \eqref{min}.

{\em Step 3: }if $\Omega=B(0,R)$ for some $R>0$, then $L=0$ is the unique minimizer of \eqref{min}.

\noindent Since $\Omega$ is a ball centered at the origin, we have $\langle \nu_\Omega,\tfrac{\xva}{|\xva|}\rangle=0$ and, discarding its third line, \eqref{eq:alina} can be rewritten as
\begin{equation}\label{eq:alina22}
\begin{split}
\area_{0,\Omega}(u) & =  \int_{\Omega} |\nabla u+\xva|\ud\LL^{2n}+|D^s u|(\Omega)+\int_{\partial\Omega}|u_{|\partial\Omega}|\ud\HH^{2n-1}\\
&\geq \int_{\Omega} \left\langle \nabla u+\xva,\tfrac{\xva}{|\xva|}\right\rangle\ud \LL^{2n}+ \int_{\Omega}\left\langle \sigma_u,\tfrac{\xva}{|\xva|} \right\rangle\ud |D^su| + \int_{\partial\Omega}|u_{|\partial\Omega}|\ud\HH^{2n-1}\\
& = \int_{\Omega}|\xva|\LL^{2n} +\int_{\partial\Omega}|u_{|\partial\Omega}|\ud\HH^{2n-1}\\
& \geq \int_{\Omega}|\xva|\LL^{2n}= \area_{0,\Omega}(0)\,,
\end{split}
\end{equation}
Let $u\in \BV(\Omega)$ be a minimizer for \eqref{min}; then (by Step 2) $\area_{0,\Omega}(u)=\area_{0,\Omega}(0)$  and the two inequalities in \eqref{eq:alina22} must be equalities. In particular, one has $u_{|\partial\Omega}=0$ and
\[
\begin{split}
& \int_{\Omega} |\nabla u+\xva|\ud\LL^{2n} =  \int_{\Omega} \left\langle \nabla u+\xva,\tfrac{\xva}{|\xva|}\right\rangle\ud \LL^{2n}\,,\\
&|D^s u|(\Omega) =  \int_{\Omega}\left\langle \sigma_u,\sigma_u \right\rangle\ud |D^su| = \int_{\Omega}\left\langle \sigma_u,\tfrac{\xva}{|\xva|} \right\rangle\ud |D^su|\,,
\end{split}
\]
so that $\sigma_u=\tfrac{\xva}{|\xva|}$ $|D^su|$-a.e. and there exists a measurable function $\lambda:\Omega\to[0,+\infty)$ such that
\[
\sigma_u|\nabla u|+\xva=\nabla u+\xva=\lambda\tfrac{\xva}{|\xva|}\quad\LL^{2n}\text{-a.e on }\Omega\,.
\]
All in all, there exists a $|Du|$-measurable function $\tilde\lambda:\Omega\to\R$ such that
\begin{align}\label{ugu3}
    \sigma_u=\tilde\lambda\xva \quad |Du|\mbox{-a.e. in}\ \Omega.
\end{align}
We claim that, up to the choice of a representative, any function $u$ satisfying \eqref{ugu3} is 0-homogeneous, i.e., it satisfies $u(z)=u(tz)$ for any $z\in B(0,R)$ and $t\in(0,1)$; roughly speaking, \eqref{ugu3} says indeed that $u$ has null radial derivative, which suggests its 0-ho\-mo\-ge\-nei\-ty. Recalling that $u_{|\partial\Omega}=0$, this would be enough to conclude that $u\equiv0$.

Consider the map
\[
\begin{split}
F:\, & [0,R)\times \bbS^{2n-1}\to \Omega=B(0,R)\\
& (\rho,\theta)\longmapsto \rho\theta
\end{split}
\]
The claimed 0-homogeneity of $u$ is clearly equivalent to the fact that $ u_0:=u\circ F:[0,R]\times \bbS^{2n-1}\to\R$ admits a representative which does not depend on $\rho$. Thus, it will be enough to prove that for any $ f_0\in C^{\infty}_c ((0,R)\times \bbS^{2n-1})$ there holds
\begin{equation}\label{antipastopiemontese}
\int_{(0,R)\times \bbS^{2n-1}}  u_0\: \frac{\partial  f_0}{\partial\rho}\,\ud(\LL^1\otimes\mu) = 0\,,
\end{equation}
where $\mu$ is the Haar measure on $\bbS^{2n-1}$. Define $f\in C^\infty_c(B(0,R)\setminus\{0\})$ by $f(z):=( f_0 \circ F^{-1})(z) = f_0(|z|,\tfrac{z}{|z|})$ for any $z\in B(0,R)\setminus\{0\}$; notice that
\[
\bigg(\Big( \frac{\partial  f_0}{\partial\rho}\Big)\circ F^{-1}\bigg)(z) = \frac{\partial f_0}{\partial\rho}\Big( |z|,\frac{z}{|z|}\Big) = \Big\langle \nabla f(z),\frac{z}{|z|}\Big\rangle\,.
\]
By a change of variable we get
\[
\begin{split}
& \int_{(0,R)\times \bbS^{2n-1}}  u_0\: \frac{\partial f_0}{\partial\rho}\ud(\LL^1\otimes\mu)\\
 = & \int_{(0,R)\times \bbS^{2n-1}}  u( F(\rho,\theta))\:\bigg(\Big( \frac{\partial  f_0}{\partial\rho}\Big) \circ F^{-1}\bigg)(F(\rho,\theta)) \:\frac{1}{\rho^{2n-1}} \,\rho^{2n-1}\,\ud(\LL^1\otimes\mu)(\rho,\theta)\\
 = & \int_{B(0,R)} u(z)\,\big\langle \nabla f(z),\frac{z}{|z|}\big\rangle\,\frac{1}{|z|^{2n-1}}\ud\LL^{2n}(z)\\
 = & \sum_{i=1}^{2n} \int_{B(0,R)} u(z)\,\frac{z_i}{|z|^{2n}}\,\frac{\partial f}{\partial z_i}(z)\ud\LL^{2n}(z)\\
 = & -\int_{B(0,R)} u(z)\,f(z)  \left[\sum_{i=1}^{2n} \frac{\partial}{\partial z_i}\left(\frac{z_i}{|z|^{2n}}\right)\right]\ud\LL^{2n}(z)
- \int_{B(0,R)} f(z)\langle \sigma_u(z),z\rangle \frac{1}{|z|^{2n}}\ud|Du|\,.
\end{split}
\]
Our claim \eqref{antipastopiemontese} is then a consequence of the equality
\[
\sum_{i=1}^{2n} \frac{\partial}{\partial z_i}\left(\frac{z_i}{|z|^{2n}}\right) = 0
\]
and the fact that $\sigma_u(z)=(\tilde\lambda \xva)(z)$ is $|Du|$-a.e. orthogonal to $z$.

{\em Step 4: }$L=0$ is the unique minimizer of \eqref{min} for general $\Omega$.

\noindent Let $u\in \BV(\Omega)$ be a minimizer of \eqref{min} and let $R>0$ be such that $\Omega\Subset B(0,R)$. Let us define
\[u_0(z):=
\left\{\begin{array}{ll}
u(z) & \text{if }z\in\Omega\\
0 & \text{if }z\in B(0,R)\setminus\overline\Omega\,.
\end{array}\right.
\]
By Step 2, also $L=0$ is a minimizer, i.e., $\area_{0,\Omega}(u)=\area_{\Omega}(0)$; thus
\begin{align*}
    \area_{0,B(0,R)}(u_0)& =\int_{\Omega}|\nabla u+\xva|\ud\LL^{2n}+|D^su|(\Omega)+\int_{B(0,R)\setminus\overline\Omega}|\xva|\ud\LL^2+ |D^su_0|(\partial\Omega)\\
     &=\area_{\Omega}(u)+\area_{B(0,R)\setminus\overline\Omega}(0)+\int_{\partial\Omega}|u_{|\partial\Omega}|\ud\HH^{2n-1}\\
    &= \area_{0,\Omega}(u) +  \area_{B(0,R)\setminus\overline\Omega}(0)\\
    &= \area_{\Omega}(0) +  \area_{B(0,R)\setminus\overline\Omega}(0) = \area_{B(0,R)} (0)=\area_{0,B(0,R)}(0)\,.
\end{align*}
Therefore, $u_0$ is a minimizer of $\area_{0,B(0,R)}$; by Step 3, this implies that $u_0=0$, i.e., that $u=0$ $\LL^{2n}$-a.e. on $\Omega$, as desired.
\end{proof}

The next Corollary is a special comparison principle for affine function and, in particular shows that affine functions satisfy a comparison principle both from above and from below.

\begin{cor}\label{cpaffine}
Let $\Omega\subset\bbR^{2n}$ be a bounded open set with Lipschitz boundary, $\varphi\in L^1(\partial\Omega)$ and $L:\bbR^{2n}\to\bbR$ be an affine function, i.e., $L(z)=\langle a,z\rangle+b$ for some $a\in\R^{2n},b\in\bbR$.
\begin{enumerate}[i)]
\item Assume that $\varphi\leq L$ $\HH^{2n-1}$-a.e. on $\partial\Omega$. Then, for any minimizer $u\in \cc_{\varphi}$ of $\area_\varphi$, we have $u\le L$ $\LL^{2n}$-a.e. in $\Omega$.
\item Assume that that $\varphi\geq L$ $\HH^{2n-1}$-a.e. on $\partial\Omega$. Then, for any minimizer $u\in \cc_{\varphi}$ of $\area_\varphi$, we have $u\ge L$ $\LL^{2n}$-a.e. in $\Omega$.
\end{enumerate}
\end{cor}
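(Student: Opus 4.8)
The plan is to obtain both statements as an immediate consequence of Theorem \ref{affineMIN} combined with the Comparison Principle, Theorem \ref{confro}. The key observation is that, by Theorem \ref{affineMIN}, the set $\cc_L$ of minimizers of $\area_{L,\Omega}$ consists of the single function $L$ (note that $L\in W^{1,1}(\Omega)$ with trace $L_{|\partial\Omega}=L_{|\partial\Omega}$, so that $L$ is an admissible competitor with vanishing boundary penalization, and $\cc_L\neq\emptyset$ as recalled after the definition of $\cc_\varphi$). In particular, in the notation of Proposition \ref{esistmax}, the maximal and the minimal element of $\cc_L$ both coincide with $L$.

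To prove (i), I would assume $\varphi\leq L$ $\HH^{2n-1}$-a.e.\ on $\partial\Omega$ and apply Theorem \ref{confro} with the pair of boundary data $\varphi$ and $\psi:=L$ (observing that $L_{|\partial\Omega}\in L^\infty(\partial\Omega)\subset L^1(\partial\Omega)$ since $\Omega$ is bounded). Since $\cc_\psi=\cc_L=\{L\}$, the maximal minimizer $\overline w$ associated with $\psi$ equals $L$, and the conclusion of Theorem \ref{confro} reads $u\leq\overline w=L$ $\LL^{2n}$-a.e.\ in $\Omega$ for every $u\in\cc_\varphi$, which is exactly the claim. For (ii) the argument is symmetric: assuming $\varphi\geq L$, I apply Theorem \ref{confro} with $L$ as the smaller boundary datum and $\varphi$ as the larger one; since $\cc_L=\{L\}$, the minimal minimizer associated with the datum $L$ is again $L$ itself, and Theorem \ref{confro} gives $L\leq w$ $\LL^{2n}$-a.e.\ in $\Omega$ for every $w\in\cc_\varphi$.

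There is essentially no obstacle left at this stage: the substantive work has already been carried out, namely the uniqueness of the affine minimizer in Theorem \ref{affineMIN} (whose proof, via the $0$-homogeneity argument on balls, is the genuinely delicate point) and the construction of extremal minimizers together with the Comparison Principle in Proposition \ref{esistmax} and Theorem \ref{confro}. The only thing to be careful about is the bookkeeping of which datum plays the role of the "upper" and which the "lower" one in the two cases, and the trivial remark that the restriction of an affine function to $\partial\Omega$ is bounded, hence an admissible datum in $L^1(\partial\Omega)$.
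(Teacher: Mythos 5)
Your argument is correct and coincides with the paper's own proof: both invoke Theorem \ref{affineMIN} to identify $\cc_L=\{L\}$, so that $L=\overline L=\underline L$ in the notation of Proposition \ref{esistmax}, and then apply the Comparison Principle of Theorem \ref{confro} with the appropriate ordering of the boundary data. Nothing further is needed.
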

\begin{proof}
Both claims follow immediately from Theorem \ref{confro} when we
observe that the set $\cc_L$ consists of just one element
that is $L$ itself, so that, following
the notations of Proposition \ref{esistmax}, $L=\overline L=\underline L$.
\end{proof}

\section{The Bounded Slope Condition}\label{ultSec}
We recall the well-known definition of Bounded Slope Condition (see \cite{HartStamp}) for boundary data. In particular we refer to
\cite{Giusti2} also for some classical results that we will summarize next.

\begin{defi}\label{BSC}
{\rm We say that a function $\varphi:\partial\Omega\to\bbR$ satisfies the bounded slope condition with constant $Q>0$ ($Q$-B.S.C.
for short, or simply B.S.C. when the constant $Q$ does not play any role) if for every $z_0\in\partial\Omega$ there exist two affine functions $w^+_{z_0}$ and $w^-_{z_0}$ such that
\begin{equation}\label{stimali}
\begin{split}
&w^-_{z_0}(z)\leq \varphi(z)\leq w^+_{z_0}(z) \quad \forall z\in\partial\Omega,\\
&w^-_{z_0}(z_0)=\varphi(z_0)=w^+_{z_0}(z_0)\\
&\lip(w^-_{z_0})\leq Q\quad \mbox{and}\quad
\lip(w^+_{z_0})\leq Q,
\end{split}
\end{equation}
where $\lip(w)$ denotes the Lipschitz constant of $w$.
}\end{defi}

We also recall that a set $\Omega\subset \bbR^{2n}$ is said to be uniformly convex if there exists a positive constant $C=C(\Omega)$
and, for each $z_0\in\partial\Omega$, an hyperplane $\Pi_{z_0}$
passing through $z_0$ such that $$|z-z_0|^2\leq C\,  dist( z,
\Pi_{z_0})\quad \forall z\in\partial\Omega,$$ where $dist(z, \Omega):=\inf\{|z-w|\ |\ w\in
\Omega\}$. It is worth noticing
that, if $\partial\Omega$ is of class $C^2$, this condition holds if
and only if all principal curvatures of $\partial\Omega$ are
strictly positive, see \cite{Giusti2} for details.

\begin{oss}\label{propBSC}
{\rm We collect here some facts on the B.S.C.
\begin{enumerate}
\item[a)] If $\varphi:\partial\Omega\to\bbR$ satisfies the
B.S.C. and is not affine, then $\Omega$ has to be convex (see \cite{Giusti2}) and $\varphi$ is Lipschitz continuous on $\partial\Omega$. Moreover, if
$\partial\Omega$ has flat faces, then $\varphi$  has to be affine on them.
\end{enumerate}
This property seems to say that the B.S.C. is a quite restrictive
assumption. Anyhow the following one, due to M. Miranda \cite{Miranda} (see also \cite[Theorem 1.1]{Giusti2}), shows
that the class of functions satisfying the B.S.C. on a uniformly
convex set is quite large.
\begin{enumerate}
\item[b)] Let $\Omega\subset\bbR^n$ be open, bounded and uniformly convex; then
 every $\varphi\in C^{1,1}(\bbR^n)$
satisfies the B.S.C. on $\partial\Omega$.
\end{enumerate}
}\end{oss}

We denote by $f, g$ the functions defined, respectively, by
$f(z):=\sup_{z_0\in\partial\Omega} w^-_{z_0}(z)$ and
$g(z):=\inf_{z_0\in\partial\Omega} w^+_{z_0}(z)$. We underline that $f$
is a convex function, $g$ is a concave function and both are
Lipschitz with Lipschitz constant not greater than $Q$.
\begin{lem}\label{datobordo}
Let $\Omega\subset\bbR^{2n}$ be an open bounded set with Lipschitz regular boundary; assume that $\varphi\in L^1(\partial\Omega)$ satisfies the $Q$-B.S.C. Then, for any $u\in \cc_{\varphi}$ there holds
\begin{enumerate}[i)]
\item   $u_{|\partial\Omega}=\varphi$;
\item $f\le u\le g$ $\LL^{2n}$-a.e. in $\Omega$;
\item $u$ is also a minimizer of  $\area_{\Omega}$  in $\BV(\Omega)$ with $u_{|\partial\Omega}=\varphi$.
\end{enumerate}
\end{lem}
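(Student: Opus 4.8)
The plan is to derive all three statements from (ii), which is the heart of the matter. Before that, I would record some elementary facts about the functions $f$ and $g$ introduced above: both are real-valued (by the uniform Lipschitz bound on the $w^\pm_{z_0}$ together with the boundedness of $\partial\Omega$), $f$ is convex, $g$ is concave, and $f=\varphi=g$ on $\partial\Omega$. The last identity is immediate from \eqref{stimali}: for $z\in\partial\Omega$ one has $w^-_{z_0}(z)\le\varphi(z)$ for every $z_0$, hence $f(z)\le\varphi(z)$, while the choice $z_0=z$ gives $f(z)\ge w^-_z(z)=\varphi(z)$; the argument for $g$ is symmetric. Since $f$ and $g$ are Lipschitz, hence continuous, their traces on $\partial\Omega$ coincide with their pointwise restrictions, so $f_{|\partial\Omega}=g_{|\partial\Omega}=\varphi$.

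For (ii) I would prove $u\le g$ $\LL^{2n}$-a.e.\ in $\Omega$ for every $u\in\cc_\varphi$ (the inequality $u\ge f$ being entirely analogous, using the convexity of $f$, supporting hyperplanes from below, and Corollary \ref{cpaffine}(ii)). The point is to turn the family of affine comparison functions provided by the B.S.C.\ into a single comparison with $g$. Fix a countable dense set $\{z_k\}_{k\in\bbN}\subset\Omega$. Since $g$ is concave and finite on $\bbR^{2n}$, for each $k$ there is an affine function $\ell_k$ with $\ell_k(z_k)=g(z_k)$ and $\ell_k\ge g$ on $\bbR^{2n}$ (a supporting hyperplane of $g$ at $z_k$); in particular $\ell_k\ge g=\varphi$ on $\partial\Omega$, so Corollary \ref{cpaffine}(i) yields $u\le\ell_k$ $\LL^{2n}$-a.e.\ in $\Omega$. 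Intersecting these countably many full-measure sets, $u(z)\le\inf_k\ell_k(z)$ for $\LL^{2n}$-a.e.\ $z\in\Omega$. Finally $\inf_k\ell_k$ is concave, hence continuous, it satisfies $\inf_k\ell_k\ge g$ because every $\ell_k\ge g$, and $\inf_k\ell_k(z_j)\le\ell_j(z_j)=g(z_j)$ for every $j$; thus $\inf_k\ell_k$ and $g$ agree on the dense set $\{z_j\}$ and therefore on $\overline\Omega$, giving $u\le g$ $\LL^{2n}$-a.e.\ in $\Omega$.

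Statement (i) then follows quickly: an $\LL^{2n}$-a.e.\ inequality between $\BV$ functions passes to their traces (immediate from the definition of the trace as an $L^1$-average limit, in the spirit of Remark \ref{traccesupinf}), so from $f\le u\le g$ a.e.\ in $\Omega$ we get $\varphi=f_{|\partial\Omega}\le u_{|\partial\Omega}\le g_{|\partial\Omega}=\varphi$ $\HH^{2n-1}$-a.e.\ on $\partial\Omega$, i.e.\ $u_{|\partial\Omega}=\varphi$. Statement (iii) is then formal: by (i) the penalization term in $\area_{\varphi,\Omega}$ vanishes on $u$, hence $\area_\Omega(u)=\area_{\varphi,\Omega}(u)$, and for every $v\in\BV(\Omega)$ with $v_{|\partial\Omega}=\varphi$ one likewise has $\area_\Omega(v)=\area_{\varphi,\Omega}(v)\ge\area_{\varphi,\Omega}(u)=\area_\Omega(u)$, since $u$ minimizes $\area_{\varphi,\Omega}$ over all of $\BV(\Omega)$.

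The main obstacle is precisely the passage carried out in the proof of (ii): Corollary \ref{cpaffine} only provides comparison with affine functions, and the affine data $w^\pm_{z_0}$ furnished by the B.S.C.\ need not depend continuously (nor even measurably) on $z_0$, so one cannot directly take the infimum over $z_0\in\partial\Omega$ inside the a.e.\ inequality $u\le w^+_{z_0}$. Replacing the $w^+_{z_0}$ by supporting hyperplanes of the already-constructed concave function $g$ at a countable dense set of interior points is what makes the argument go through, and this is exactly why the convexity of $f$ and the concavity of $g$ are emphasized beforehand.
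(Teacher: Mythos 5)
Your proof is correct, and its engine is the same as the paper's: comparison with affine functions via Corollary \ref{cpaffine}, applied to the $w^\pm_{z_0}$ furnished by the B.S.C.\ and to supporting hyperplanes of the convex function $f$ and the concave function $g$. The organization, however, is genuinely different. The paper proves (i) first and directly: from $w^-_{z_0}\le u\le w^+_{z_0}$ $\LL^{2n}$-a.e.\ and \eqref{stimali} it gets $|u(z)-\varphi(z_0)|\le Q|z-z_0|$ for a.e.\ $z\in\Omega$ and every $z_0\in\partial\Omega$, and then identifies the trace by averaging over $\Omega\cap B(z_0,\rho)$ and letting $\rho\to0^+$; you instead deduce (i) from (ii) using monotonicity of the trace operator together with $f_{|\partial\Omega}=g_{|\partial\Omega}=\varphi$, which is equally valid and is essentially Remark \ref{traccesupinf}. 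For (ii) the paper fixes a single Lebesgue point $\bar z$ of $u$, takes the supporting hyperplane of $f$ (resp.\ $g$) at $\bar z$ itself, and passes to the limit of averages over $B(\bar z,\rho)$ to conclude $f(\bar z)\le u(\bar z)\le g(\bar z)$; you take supporting hyperplanes at a countable dense set of interior points, intersect countably many full-measure sets, and then identify $\inf_k\ell_k$ with $g$ by continuity of finite concave functions. Both devices address exactly the difficulty you flag at the end --- the exceptional null set in $u\le w^+_{z_0}$ depends on $z_0$, so one cannot simply take an uncountable infimum --- and neither buys anything substantial over the other; the paper's Lebesgue-point version is marginally shorter, while yours avoids having to name a Lebesgue point at all. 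All the auxiliary facts you invoke (finiteness and Lipschitz continuity of $f$ and $g$, $f=\varphi=g$ on $\partial\Omega$, trace monotonicity) are correctly justified, and your derivation of (iii) matches the paper's.
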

\begin{proof}
$i)$ For every $z_0\in\partial\Omega$, let $w_{z_0}^+$ and
$w_{z_0}^{-}$ be as in Definition \ref{BSC}. By Corollary
\ref{cpaffine}, we have that $w_{z_0}^-\le u\le w_{z_0}^+$
$\LL^{2n}$-a.e. in $\Omega$. Recalling \eqref{stimali} we obtain
\begin{align}\label{gho2}
 |u(z)-\varphi(z_0)|\leq Q |z-z_0|\quad \LL^{2n}\mbox{-a.e.}\ z\in\Omega, \forall z_0\in\partial\Omega.
\end{align}
Therefore,
\begin{align}\label{mediatr}
\frac{1}{\rho^{2n}}\int_{\Omega\cap B(z_0,\rho)}|u-\varphi(z_0)|\ud\LL^{2n}\leq \frac{Q}{\rho^{2n}}\int_{\Omega\cap B(z_0,\rho)}|z-z_0|\ud\LL^{2n}(z)\leq Q\rho
\end{align}
and letting $\rho\to 0^+$ in (\ref{mediatr}) we conclude that
$u_{|\partial\Omega}=\varphi$.

$ii)$ Fix  a Lebsgue point $\bar z\in\Omega$ of $u$.
Since $f$ is a convex function, there exists $\xi\in\bbR^{2n}$ such that $f(z)\ge
f(\bar z)+\xi\cdot(z-\bar z):=h(z)$ for every $z\in\overline\Omega$. The
function $h$ is affine and $h\le\varphi$ on $\partial\Omega$; then
Corollary \ref{cpaffine} implies that $u\ge h$ $\LL^{2n}$-a.e. in
$\Omega$. Considering the mean integral on a ball centered at $\bar z$
we obtain
\[
\fint_{B(\bar z,\rho)} u(z) d\LL^{2n}\ge  \fint_{B(\bar z,\rho)} h(z) d\LL^{2n}
\]
and, passing to the limit as $\rho\to0^+$, we get $u(\bar z)\ge
f(\bar z)$. One can argue in a similar way to prove that $u\le g$
$\LL^{2n}$-a.e. in $\Omega$.

Finally, the proof of $iii)$ is straightforward.
\end{proof}

\begin{oss}\label{sottodomini}{\rm
If $\Omega'\subset\Omega$ are open bounded domains with Lipschitz regular boundary and $u\in \BV(\Omega)$, we use the notation $\area_{u,\Omega'}$ to denote the functional $\area_{u_{|\partial\Omega'},\Omega'}$. Let us prove that, if $u$ is a minimizer of $\area_{\varphi,\Omega}$ with $\varphi=u_{|\partial\Omega}$, then $u$ is also a minimizer of $\area_{u,\Omega'}$.

Let us  write $\Gamma:=\partial\Omega'\cap\Omega$ and $\partial\Omega=\Delta_1\cup\Delta_2$, where
\[
\Delta_1:= \partial\Omega\cap\partial\Omega'\quad\text{and}\quad \Delta_2:= \partial\Omega\setminus\partial\Omega'\,.
\]
Notice that $\partial\Omega'=\Gamma\cup\Delta_1$. We also denote by $u_i,u_o:\Gamma\to\R$ the ``inner'' and ``outer'' (with respect to $\Omega'$) traces of $u$ on $\Gamma$, i.e.,
\[
u_i:=(u_{|\partial\Omega'})\res\Gamma\quad\text{and}\quad u_o:=(u_{|\partial(\Omega\setminus\overline{\Omega'})})\res\Gamma\,.
\]
Assume by contradiction that $u$ is not a minimizer of $\area_{u,\Omega'}$; then, there exists $v\in \BV(\Omega')$ such that
\begin{equation}\label{ristorantebrasiliano}
\begin{split}
0 & < \area_{u,\Omega'}(u)-\area_{u,\Omega'}(v)\\
& = \area_{\Omega'}(u) - \area_{\Omega'}(v) - \int_{\partial\Omega'}|v_{|\partial\Omega'}-u_{|\partial\Omega'}|\ud\HH^{2n-1}\\
& = \area_{\Omega'}(u) - \area_{\Omega'}(v) - \int_{\Gamma}|v_{|\partial\Omega'}-u_i|\ud\HH^{2n-1}- \int_{\Delta_1}|v_{|\partial\Omega'}-\varphi|\ud\HH^{2n-1}\,.
\end{split}
\end{equation}
We will reach a  contradiction if we show that the function $w\in\BV(\Omega)$ defined by
\[
w:=v\text{ on }\Omega',\quad w:=u\text{ on }\Omega\setminus\Omega'
\]
satisfies $\area_{\varphi,\Omega}(u)- \area_{\varphi,\Omega}(w)>0$.

Let us compute
\[
\begin{split}
\area_{\varphi,\Omega}(u) & = \area_{\Omega}(u) =  \area_{\Omega'}(u) + \area_{\Omega\setminus\overline{\Omega'}}(u) + |D^s u|(\Gamma)\\
& =   \area_{\Omega'}(u) + \area_{\Omega\setminus\overline{\Omega'}}(u) + \int_\Gamma |u_o-u_i|\ud\HH^{2n-1}
\end{split}
\]
(we have used the assumption $\varphi=u_{|\partial\Omega}$) and
\[
\begin{split}
\area_{\varphi,\Omega}(v) &=   \area_{\Omega'}(v) + \area_{\Omega\setminus\overline{\Omega'}}(u) + |D^s w|(\Gamma)+ \int_{\partial\Omega} |w_{|\partial\Omega}-\varphi|\ud\HH^{2n-1}\\
&=   \area_{\Omega'}(v) + \area_{\Omega\setminus\overline{\Omega'}}(u) + \int_\Gamma |v_{|\partial\Omega'}-u_o|\ud\HH^{2n-1} + \int_{\Delta_1} |v_{|\partial\Omega}-\varphi|\ud\HH^{2n-1}\,.
\end{split}
\]
Therefore
\[
\begin{split}
& \area_{\varphi,\Omega}(u) -\area_{\varphi,\Omega}(v)\\
= \,& \area_{\Omega'}(u)-  \area_{\Omega'}(v) + \int_\Gamma \Big(|u_o-u_i| - |v_{|\partial\Omega'}-u_o| \Big)\ud\HH^{2n-1} - \int_{\Delta_1} |v_{|\partial\Omega}-\varphi|\ud\HH^{2n-1}\\
\geq\, & \area_{\Omega'}(u) - \area_{\Omega'}(v) - \int_{\Gamma}|v_{|\partial\Omega'}-u_i|\ud\HH^{2n-1}- \int_{\Delta_1}|v_{|\partial\Omega'}-\varphi|\ud\HH^{2n-1}\\
>\, & 0
\end{split}
\]
by \eqref{ristorantebrasiliano}, as desired.
}\end{oss}

We are now in position to prove our main result.

\begin{proof}[Proof of Theorem \ref{mainteo-ex6.7}] We divide the proof into several steps.

{\em Step 1.} We denote by $\overline u$ the (pointwise a.e.) maximum of the minimizers of $\area_{\varphi,\Omega}$ in $\BV$ (see Theorem \ref{maxmin}). Lemma \ref{datobordo} implies that $f\le \overline u\le g$ $\LL^{2n}$-a.e. in $\Omega$ and $\overline u=\varphi=f=g$ on $\partial\Omega$; in particular, $\overline u$ is also a minimizer for \eqref{problema}.

Let $\tau\in\R^{2n}$ be such that $\Omega\cap\Omega_\tau\neq\emptyset$; following the notations introduced in Section \ref{uniqSec}, we consider the function $\overline u_{\tau,0}^*$, which we denote by $\overline u_\tau^*$ to simplify the notation. Let us consider the set $\Omega\cap\Omega_\tau$. By Remark \ref{sottodomini}, $\overline u$ is a minimizer of $\area_{\overline u,\Omega\cap\Omega_\tau}$ and, by Corollary \ref{tildemax} and Remark \ref{sottodomini}, $\ut$ is a minimizer of $\area_{\ut,\Omega\cap\Omega_\tau}$. Let $z\in\partial(\Omega\cap\Omega_\tau)$, then either $z\in\partial\Omega$ or $z\in\partial\Omega_\tau$.

\noindent If $z\in \partial \Omega$, then $z+\tau\in\overline\Omega$ and the inequality \eqref{gho2} in Lemma \ref{datobordo} implies that
\begin{equation}\label{disbordo}
\overline u(z)-Q|\tau|\le \overline u(z+\tau)\le \overline u(z)+Q|\tau|\,.
\end{equation}
Otherwise, $z\in\partial\Omega_\tau$ and $z=(z+\tau)-\tau\in\overline\Omega$, and Lemma \ref{datobordo} implies again \eqref{disbordo}.

So we have proved that \eqref{disbordo} holds for any $z\in\partial (\Omega\cap\Omega_\tau)$, hence
\[
\overline u(z)-Q|\tau|+2\langle\tau^*,z\rangle\le \overline u(z+\tau)+2\langle\tau^*,z\rangle\le \overline u(z)+Q|\tau|+2\langle\tau^*,z\rangle\,.
\]
Setting $M:=Q+2\sup_{z\in\Omega}|z|$, one has
\[
\overline u(z)-M|\tau|\le  \ut(z)\le \overline u(z)+M|\tau|\quad \text{for any }z\in\partial(\Omega\cap\Omega_\tau)
\]
and, by Corollary \ref{valbordo},
\[
\overline u(z)-M|\tau|\le  \ut(z)\le \overline u(z)+M|\tau|\quad \text{for $\LL^{2n}$-a.e. }z\in\Omega\cap\Omega_\tau\,.
\]
This is equivalent to
\[
\overline u(z)-M|\tau|-2\langle \tau^*,z\rangle\le  \overline u(z+\tau)\le \overline u(z)+M|\tau|-2\langle \tau^*,z\rangle\quad \text{for $\LL^{2n}$-a.e. }z\in\Omega\cap\Omega_\tau
\]
and, setting $K:=M+2\sup_{z\in\Omega}|z|$,
\[
\overline u(z)-K|\tau|\le  \overline u(z+\tau)\le \overline u(z)+K|\tau|\quad \text{for $\LL^{2n}$-a.e. }z\in\Omega\cap\Omega_\tau
\]

{\em Step 2.} We claim that the inequality $|\overline u(z)-\overline u(\bar z)|\le K|z-\bar z|$ holds for any Lebesgue points $z,\bar z$ of $\overline u$. We define $\tau:=\bar z-z$; then $\Omega\cap\Omega_\tau\neq\emptyset$ and, arguing as in Step 1, we obtain
\[
|\overline u(z'+\tau)-\overline u(z')|\le K|\tau| \quad\text{for $\LL^{2n}$-a.e. }z'\in\Omega\cap\Omega_\tau.
\]
Let $\rho>0$ be such that $B(z,\rho)\subset\Omega\cap\Omega_\tau$ and $B(\bar z,\rho)\subset\Omega\cap\Omega_\tau$; then
\[
\begin{split}
|\overline u(z)-\overline u(\bar z)|=&\left|\lim_{\rho\to 0}\left(\fint_{B(z,\rho)}\overline u(z') dz'-\fint_{B(\bar z,\rho)}\overline u(z') dz'\right)\right|\\
\le&\lim_{\rho\to 0}\fint_{B(z,\rho)}\left| \overline u(z') - \overline u(z'+\tau) \right|dz'\le K|z-\bar z|.
\end{split}
\]

{\em Step 3.} We have proved that $\overline u$, the  maximum of the minimizer of $\area_\varphi$, has a representative that is Lipschitz continuous on $\Omega$, with Lipschitz constant not greater than $K= Q+4\sup_{z\in\Omega}|z|$. The same argument leads to prove that $\underline u$, the minimum of the minimizers of $\area_\varphi$, has a representative that is Lipschitz continuous on $\Omega$, with Lipschitz constant not greater than $K$. The uniqueness criterion in Proposition \ref{unicitapp'} (with $p=1$) implies that $\overline u=\underline u$ $\LL^{2n}$-a.e. on $\Omega$. If $u$ is another minimizer of $\area_\varphi$, we have by Proposition \ref{esistmax} that $\underline u\le u\le\overline u$  $\LL^{2n}$-a.e. on $\Omega$. This concludes the proof.
\end{proof}

The following examples show that, at least in the case $n=1$, Theorem \ref{mainteo-ex6.7} is sharp, in the sense that minimizers might not be better than Lipschitz regular.

\begin{es}\label{es1}{\rm
It was proved\footnote{Up to an easy adaptation.} in \cite[Example 7.2]{CHY} that the Lipschitz function
\[
u(x,y):=\left\{\begin{array}{ll}
2xy & \text{if }y>0\\
0 & \text{if }y\leq 0
\end{array}\right.
\]
is a minimizer of $\area_{u_{|\partial\Omega},\Omega}$ on any bounded open set $\Omega$ with Lipschitz regular boundary. Let us prove that $u_{|\partial\Omega}$ satisfies the B.S.C. on the open set
\[
\Omega := \{(x,y)\in\R^2 : x^2-1 < y < 1-x^2 \}\,.
\]
Indeed, one can easily check that $u(x,y)=x(y-x^2+1)=:\varphi(x,y)$ for any $(x,y)\in\partial\Omega$; moreover, $\Omega$ is uniformly convex and $\varphi\in C^{\infty}(\R^2)$, thus $u_{|\partial\Omega}$ satisfies the B.S.C. on $\Omega$ because of Remark \ref{propBSC} (b). By Theorem \ref{mainteo-ex6.7}, $u$ is the unique minimizer of $\area_{\varphi,\Omega}$ on $\BV(\Omega)$; notice that $u$ is not better than Lipschitz continuous on $\Omega$.
}\end{es}

\begin{es}{\rm
The previous example provides a nonsmooth minimizer of $\area$ on a nonsmooth domain; it is anyway possible to exhibit nonsmooth minimizers also on smooth domain. Indeed, it was proved in \cite[Example 3.4]{R2} that the $C^{1,1}$ function $u(x,y):=-2xy + y|y|$ minimizes $\area$ (under boundary conditions given by $u$ itself) on any bounded domain $\Omega\subset\R^2$ with Lipschitz regular boundary. Notice that, by Remark \ref{propBSC} (b), $u$ satisfies a B.S.C. on any smooth and uniformly convex domain.
}\end{es}



\end{document}